\title{A measure concentration effect for matrices 
  of high, higher, and even higher dimension}
\author{Harry Yserentant
 \thanks{TU Berlin, Institut f\"ur Mathematik, 10623 Berlin, Germany 
         (yserentant@math.tu-berlin.de)}} 
\begin{document}

\maketitle

\begin{abstract}
Let $n>m$, and let $A$ be an $(m\times n)$-matrix of full rank.
Then obviously the estimate $\|Ax\|\leq\|A\|\|x\|$ holds for 
the euclidean norm of $x$ and $Ax$ and the spectral norm as 
the assigned matrix norm. We study the sets of all $x$ for which, 
for fixed $\delta<1$, conversely $\|Ax\|\geq\delta\,\|A\|\|x\|$ 
holds. It turns out that these sets fill, in the 
high-dimensional case, almost the complete space once $\delta$ 
falls below a bound that depends on the extremal singular 
values of $A$ and on the ratio of the dimensions. This effect 
has much to do with the random projection theorem, which plays
an important role in the data sciences. As a byproduct, we 
calculate the probabilities this theorem deals with exactly.
\end{abstract}

\begin{keywords}
high-dimensional matrices, 
measure concentration,
random projection theorem
\end{keywords}

\begin{AMS}
15A99, 15B52, 60B20
\end{AMS}

%
%
%
%
%

\pagestyle{myheadings}
\thispagestyle{plain}

\markboth{\uppercase{Harry Yserentant}}
{\uppercase{A measure concentration effect for matrices}}


\renewcommand {\thefigure}{\arabic{figure}}

\newcommand   {\rmref}[1]   {{\rm (\ref{#1})}}

\newcommand   {\diff}[1]    {\mathrm{d}#1}

\newcommand   {\fourier}[1] {\widehat{#1}}

\def \xy      {\Big(\,\begin{matrix}x\\y\end{matrix}\,\Big)}

\def \dx      {\,\diff{x}}
\def \dy      {\,\diff{y}}

\def \dxy     {\,\diff{(x,y)}}

\def \deta    {\,\diff{\eta}}
\def \domega  {\,\diff{\omega}}

\def \dr      {\,\diff{r}}
\def \ds      {\,\diff{s}}
\def \dt      {\,\diff{t}}

\def \L       {\mathcal{L}}


\section{Introduction}
\label{sec1}

\setcounter{equation}{0}
\setcounter{theorem}{0}

Let $n>m$ and let $A$ be a real $(m\times n)$-matrix
of rank $m$. The kernel of such a matrix has the 
dimension $n-m$ and hence can, in dependence of the 
dimensions, be a large subspace of the $\mathbb{R}^n$. 
Nevertheless, the set of all $x$ for which
\begin{equation}    \label{eq1.1}
\|Ax\|\geq\delta\,\|A\|\|x\|
\end{equation}
holds fills, in the high-dimensional case, often 
almost the complete $\mathbb{R}^n$ once $\delta$ 
falls below a certain bound; the involved norms 
are here and throughout the paper the euclidean 
norm on the $\mathbb{R}^m$ and the $\mathbb{R}^n$  
and the assigned spectral norm of matrices. Let 
$\chi$ be the characteristic function of the 
set of all $x$ for which  
\mbox{$\|Ax\|<\delta\,\|A\|\|x\|$} holds, and 
let $\nu_n$ be the volume of the unit ball in 
$\mathbb{R}^n$. The normed area measure
\begin{equation}    \label{eq1.2}
\frac{1}{n\nu_n}\int_{S^{n-1}}\!\chi(\eta)\deta
\end{equation}
of the subset of the unit sphere on which the 
condition (\ref{eq1.1}) is violated takes in 
such cases an extremely small value, which
conversely again means that (\ref{eq1.1}) holds 
on an overwhelmingly large part of the unit 
sphere and with that of the full space. The 
aim of the present paper is to study this 
phenomenon in dependence of characteristic 
quantities like the ratio of the dimensions 
$m$ and $n$ and the extremal singular values 
of the matrices qualitatively and, as far as 
possible, also quantitatively. 

The described effect is best understood for orthogonal 
projections. For matrices of this kind, this observation 
is a direct consequence of the random projection theorem 
(see Lemma~5.3.2 in \cite{Vershynin}, for example), which 
is in close connection with the Johnson--Lindenstrauss 
theorem \cite{Johnson-Lindenstrauss}. The random projection 
theorem deals with orthogonal projections from the 
$\mathbb{R}^n$ onto random subspaces of lower dimension 
$m$, but equally one can consider orthogonal projections 
$Px$ of random vectors $x$ to the~$\mathbb{R}^m$. The 
theorem states that with probability greater than 
$1-2\exp(-c\,\varepsilon^2m)$
\begin{equation}    \label{eq1.3}
(1-\varepsilon)\sqrt{\frac{m}{n}}\,\|x\|
\leq \|Px\| \leq
(1+\varepsilon)\sqrt{\frac{m}{n}}\,\|x\|
\end{equation}
holds for all $x$ on the unit sphere and thereby 
also in the full space. The random projection 
theorem is a manifestation of the concentration 
of measure phenomenon, which plays a fundamental 
role in the analysis of very many problems in 
high space dimensions and became a backbone of 
high-dimensional probability theory and modern 
data science. The interest in the concentration 
of measure phenomenon arose in the early 1970s 
in the study of the asymptotic theory of Banach 
spaces. Classical texts are \cite{Ledoux} and 
\cite{Ledoux-Talagrand}. An up-to-date exposition 
containing a lot of information on random vectors 
and matrices is \cite{Vershynin}. In the present 
article, we carefully reconsider the random 
projection theorem. Among other things, we 
calculate the normed area measure (\ref{eq1.2}) 
for projection matrices of the described kind 
for even-numbered differences of the two 
dimensions exactly and derive a very sharp 
inclusion for the odd-numbered case. The 
results for these projection matrices serve 
as a basis for the examination of general 
matrices in dependence of their singular values. 

One might ask what is known about these. The 
simple answer is that this depends  on the 
class of matrices one considers. Graph theory 
\cite{Brouwer-Haemers}, 
\cite{Cvetkovic-Rowlinson-Simic} is an
important source of information. There is 
an extensive literature about the extremal 
singular values of random matrices with 
independent, identically distributed 
entries. An early breakthrough was Edelman's 
thesis \cite{Edelman}. Other significant 
contributions are 
\cite{Rudelson-Vershynin}, \cite{Tao-Vu}, 
and, very recently, 
\cite{Livshyts-Tikhomirov-Vershynin}. 
Random matrices play an important role 
in fields like compressed sensing 
\cite{Foucart-Rauhut} and all sorts 
of data acquisition and compression 
techniques. Our interest in the problem 
originates from the attempt \cite{Yserentant} 
to extend the applicability of modern tensor 
product methods \cite{Hackbusch} to more 
general problem classes. Assume that we 
are looking for the solution 
$u:\mathbb{R}^m\to\mathbb{R}$ of the 
Laplace-like equation 
\begin{equation}    \label{eq1.4}
-\Delta u+\mu u=f
\end{equation}
that vanishes at infinity, where $\mu>0$ is
constant and the right-hand side $f$ is, 
for instance, a product of functions depending 
only on a single component of $x$ or on the 
difference of two such components. The 
question is how well such structures are 
reflected in the solution of the equation.

Let us assume that the right-hand side is 
of the form $f(x)=F(Tx)$, with a function
$F:\mathbb{R}^n\to\mathbb{R}$, $n>m$, with
an integrable Fourier transform and with $T$ 
a matrix of full rank that is determined by 
the structure of the underlying problem. As 
shown in \cite{Yserentant}, the solution 
is then the trace $u(x)=U(Tx)$ of the 
function
\begin{equation}    \label{eq1.5}
U(y)=\Big(\frac{1}{\sqrt{2\pi}}\Big)^n\!\int
\frac{1}{\mu+\|T^t\omega\|^2}\,\fourier{F}(\omega)\,
\mathrm{e}^{\,\mathrm{i}\,\omega\cdot y}\domega.
\end{equation}
The function (\ref{eq1.5}) is in the domain 
of the operator $\L$ given by
\begin{equation}    \label{eq1.6}
(\L U)(y)=\Big(\frac{1}{\sqrt{2\pi}}\Big)^n\!\int
\big(\mu+\|T^t\omega\|^2\big)\fourier{U}(\omega)\,
\mathrm{e}^{\,\mathrm{i}\,\omega\cdot y}\domega
\end{equation}
and solves by definition the degenerate
second-order elliptic equation $\L U=F$.
It can be calculated approximately by 
means of the iteration
\begin{equation}    \label{eq1.7}
U_{k+1}=(I-\alpha\L)U_k+\alpha F,
\end{equation}
where the operator $\alpha$ is given by
\begin{equation}    \label{eq1.8}
(\alpha F)(y)=\Big(\frac{1}{\sqrt{2\pi}}\Big)^n\!\int
\frac{1}{\mu+\|T^t\|^2\|\omega\|^2}\,
\fourier{F}(\omega)\,
\mathrm{e}^{\,\mathrm{i}\,\omega\cdot y}\domega.
\end{equation}
Provided that $\|T^t\omega\|\geq\delta\,\|T^t\|\|\omega\|$
holds on the support of $\fourier{F}$, the $L_1$-norm
of the Fourier transform of the error is in every
iteration step reduced by the factor \mbox{$1-\delta^2$},
or by more than the factor $1-\delta$ with polynomial 
acceleration. If this condition is only violated on a 
very small set, the additional error can in general be 
neglected without hard conditions to $\fourier{F}$ or 
$F$ itself. The idea is to approximate the kernel in 
(\ref{eq1.8}) by a linear combination of Gauss functions. 
If $F$ is as in the example above the product of 
lower-dimensional functions depending only on small 
groups of components of $Tx$, the iterates are then 
composed of functions of the same type.


\section{Reformulations as volume integrals and first estimates}
\label{sec2}

\setcounter{equation}{0}
\setcounter{theorem}{0}

The surface integrals (\ref{eq1.2}) are not easily 
accessible and are difficult to calculate and estimate. 
We reformulate them therefore at first as volume 
integrals and draw some first conclusions from 
these representations. The starting point is the 
decomposition
\begin{equation}    \label{eq2.1}
\int_{\mathbb{R}^n}f(x)\dx\,= 
\int_{S^{n-1}}\left(\int_0^\infty\!f(r\eta)r^{n-1}\dr\right)\!\deta
\end{equation}
of the integrals of functions in $L_1$ into 
an inner radial and an outer angular part. 
Inserting the characteristic function of the
unit ball, one recognizes that the area of
the $n$-dimensional unit sphere is $n\nu_n$,
with $\nu_n$ the volume of the unit ball. If 
$f$ is rotationally symmetric, $f(r\eta)=f(re)$ 
holds for every $\eta\in S^{n-1}$ and every 
fixed, arbitrarily given unit vector $e$.
In this case, (\ref{eq2.1}) reduces 
therefore to
\begin{equation}    \label{eq2.2}
\int f(x)\dx\,=n\nu_n\int_0^\infty\!f(re)r^{n-1}\dr.
\end{equation}
The volume measure on the $\mathbb{R}^n$ will 
in the following be denoted by $\lambda$.

\begin{lemma}       \label{thm2.1}
Let $A$ be an arbitrary matrix of dimension $m\times n$, 
$m<n$, let $\chi$ be the characteristic function of
the set of all $x\in\mathbb{R}^n$ for which 
$\|Ax\|<\delta\,\|A\|\|x\|$ holds, and let 
$W:\mathbb{R}^n\to\mathbb{R}$ be a rotationally 
symmetric function with integral
\begin{equation}    \label{eq2.3}
\int W(x)\dx\,=\,1.
\end{equation}
The weighted surface integral \rmref{eq1.2}  
then takes the value
\begin{equation}    \label{eq2.4}
\int \chi(x)W(x)\dx.
\end{equation}
\end{lemma}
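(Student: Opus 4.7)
The plan is to apply the radial/angular decomposition \rmref{eq2.1} to the function $f(x)=\chi(x)W(x)$ and then exploit two structural facts separately: the homogeneity of $\chi$ and the rotational symmetry of $W$.

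First I would observe that the defining inequality $\|Ax\|<\delta\|A\|\|x\|$ is invariant under positive scaling of $x$, so $\chi(r\eta)=\chi(\eta)$ for all $r>0$ and all $\eta\in S^{n-1}$. Inserting $f=\chi W$ into \rmref{eq2.1} therefore gives
\begin{equation*}
\int \chi(x)W(x)\dx
=\int_{S^{n-1}}\!\chi(\eta)\!\left(\int_0^\infty\!W(r\eta)r^{n-1}\dr\right)\!\deta .
\end{equation*}
Next, because $W$ is rotationally symmetric, the inner radial integral is the same for every unit vector $\eta$; it equals the corresponding integral for an arbitrary fixed unit vector $e$. By \rmref{eq2.2} applied to $W$ together with the normalization \rmref{eq2.3},
\begin{equation*}
\int_0^\infty\!W(re)r^{n-1}\dr
=\frac{1}{n\nu_n}\int W(x)\dx
=\frac{1}{n\nu_n}.
\end{equation*}
Pulling this constant factor out of the outer integral yields exactly the normed surface integral \rmref{eq1.2}, which completes the identification.

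I do not expect a real obstacle here: the proof is a two-line consequence of \rmref{eq2.1}, \rmref{eq2.2}, and the homogeneity of the set $\{\|Ax\|<\delta\|A\|\|x\|\}$. The only point that needs to be stated carefully is this homogeneity, since it is what allows the angular variable to be separated from the radial one even though $\chi$ itself is not rotationally symmetric; the rotational symmetry of $W$ then handles the radial factor in a way that is entirely independent of $A$ and $\delta$.
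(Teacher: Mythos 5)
Your proposal is correct and follows exactly the paper's own argument: insert $f=\chi W$ into the decomposition \rmref{eq2.1}, use the homogeneity $\chi(r\eta)=\chi(\eta)$ and the rotational symmetry of $W$ to separate the angular and radial parts, and evaluate the inner integral as $1/(n\nu_n)$ via \rmref{eq2.2} and \rmref{eq2.3}. No differences worth noting.
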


\begin{proof}
Let $e$ be a given unit vector. For $\eta\in S^{n-1}$ and 
$r>0$ then $\chi(r\eta)=\chi(\eta)$ and $W(r\eta)=W(re)$ 
holds and the integral (\ref{eq2.4}) can by (\ref{eq2.1})
be written as
\begin{displaymath}
\int\chi(x)W(x)\dx\,= 
\int_{S^{n-1}}\chi(\eta)\left(\int_0^\infty\!W(re)r^{n-1}\dr\right)\!\deta.
\end{displaymath}
Because the inner integral takes by (\ref{eq2.2}) 
and (\ref{eq2.3}) the value
\begin{displaymath}
\int_0^\infty\!W(re)r^{n-1}\dr=\frac{1}{n\nu_n},
\end{displaymath}
this proves the proposition.
\end{proof}

An obvious choice for the weight function $W$, 
which will later still play an important role
and will be used at several places, is the 
normed Gauss function
\begin{equation}    \label{eq2.5}
W(x)=\Big(\frac{1}{\sqrt{\pi}}\Big)^n\exp\big(-\|x\|^2\big).
\end{equation}
Another possible choice is the characteristic 
function of the ball of radius $R$ around the 
origin divided by the volume of this ball. 
It leads to the following lemma.

\begin{lemma}       \label{thm2.2}
Let $A$ be a matrix of dimension $m\times n$, $m<n$, 
and let $\lambda$ be the volume measure on the
$\mathbb{R}^n$. The weighted integral \rmref{eq1.2} 
over the surface of the unit ball is then 
independent of the radius $R$ equal to the volume 
ratio
\begin{equation}    \label{eq2.6}
\frac
{\lambda\big(\big\{x\,\big|\,\|Ax\|<\delta\,\|A\|\|x\|,\,\|x\|\leq R\big\}\big)}
{\lambda\big(\big\{x\,\big|\,\|x\|\leq R\big\}\big)}.
\end{equation}
\end{lemma}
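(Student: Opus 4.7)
My plan is to derive this as a direct corollary of Lemma~\ref{thm2.1} by making the specific choice
\begin{equation*}
W(x)=\frac{1}{\lambda\big(\big\{y\,\big|\,\|y\|\leq R\big\}\big)}\,\chi_R(x),
\end{equation*}
where $\chi_R$ is the characteristic function of the closed ball of radius $R$ around the origin. This $W$ is clearly rotationally symmetric and, by construction, integrates to one, so it satisfies the hypotheses of Lemma~\ref{thm2.1}.

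With this choice, Lemma~\ref{thm2.1} immediately rewrites the surface integral \rmref{eq1.2} as
\begin{equation*}
\int\chi(x)W(x)\dx\,=\,
\frac{1}{\lambda\big(\big\{y\,\big|\,\|y\|\leq R\big\}\big)}
\int\chi(x)\chi_R(x)\dx,
\end{equation*}
and the remaining integral is just the measure of the set of all $x$ with $\|x\|\leq R$ on which $\chi$ takes the value one, that is, the set appearing in the numerator of \rmref{eq2.6}. This establishes the claimed equality.

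It remains to justify the assertion that the ratio \rmref{eq2.6} does not depend on $R$. One way is to observe that the left-hand side \rmref{eq1.2} of Lemma~\ref{thm2.1} manifestly does not involve $R$, so the right-hand side cannot either. Alternatively, one can give a direct proof by the substitution $x=Ry$: since the condition $\|Ax\|<\delta\,\|A\|\|x\|$ is invariant under scaling of $x$, both numerator and denominator in \rmref{eq2.6} transform by the common factor $R^n$ under this substitution, and the ratio is preserved.

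I do not anticipate any real obstacle here; the lemma is essentially the observation that Lemma~\ref{thm2.1} applied to a uniform weight on a ball yields a volume ratio, together with the (equally immediate) scale invariance of the condition defining $\chi$. The only point requiring minimal care is to note that the weight function is rotationally symmetric in the sense required by Lemma~\ref{thm2.1}, which is obvious for the normalized indicator of a ball centered at the origin.
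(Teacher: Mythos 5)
Your proof is correct and is exactly the argument the paper intends: the sentence preceding the lemma states that Lemma~\ref{thm2.1} applied with the normalized characteristic function of the ball of radius $R$ as the weight $W$ yields the volume ratio \rmref{eq2.6}, and the independence of $R$ follows since the surface integral \rmref{eq1.2} does not involve $R$. Nothing further is needed.
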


Because the euclidean length of a vector and 
the volume of a set are invariant to orthogonal 
transformations, the surface ratio (\ref{eq1.2}) 
and the volume ratio (\ref{eq2.6}) as well 
depend only on the singular values of the 
matrix under consideration.

\begin{lemma}       \label{thm2.3}
Let $A$ be a matrix of dimension $m\times n$, $m<n$, 
with singular value decomposition $A=U\Sigma V^t$.
The volume ratios \rmref{eq2.6} are then equal to 
the volume ratios
\begin{equation}    \label{eq2.7}
\frac
{\lambda\big(\big\{x\,\big|\,\|\Sigma x\|<\delta\,\|\Sigma\|\|x\|,\,\|x\|\leq R\big\}\big)}
{\lambda\big(\big\{x\,\big|\,\|x\|\leq R\big\}\big)}; 
\end{equation}
that is, they depend exclusively on the singular 
values of the matrix $A$.
\end{lemma}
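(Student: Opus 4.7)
The plan is to exploit the SVD $A=U\Sigma V^t$ together with the orthogonal invariance of both Euclidean length and Lebesgue volume, which were already emphasized in the remark preceding the lemma. Concretely, I would perform the substitution $x=Vy$ in the numerator of the ratio \rmref{eq2.6} and verify that the defining conditions transform into precisely those of \rmref{eq2.7}.

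First I would record the two invariances needed. Since $U$ and $V$ are orthogonal, the substitution $y=V^t x$ has Jacobian determinant of modulus one, hence $\lambda$ is preserved under the map $x\mapsto Vy$; also $\|Vy\|=\|y\|$, so the ball $\{x:\|x\|\leq R\}$ corresponds bijectively to $\{y:\|y\|\leq R\}$, and the denominators of \rmref{eq2.6} and \rmref{eq2.7} agree trivially. For the numerator, I would compute
\begin{equation*}
\|A(Vy)\|=\|U\Sigma V^t Vy\|=\|U\Sigma y\|=\|\Sigma y\|,
\end{equation*}
using that $U$ is orthogonal and thus preserves the $\mathbb{R}^m$-norm. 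Together with $\|Vy\|=\|y\|$ and the identity $\|A\|=\|\Sigma\|$ (which holds because the spectral norm is the largest singular value, unchanged by the orthogonal factors of the SVD), the inequality $\|Ax\|<\delta\,\|A\|\|x\|$ transforms exactly into $\|\Sigma y\|<\delta\,\|\Sigma\|\|y\|$.

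Putting these pieces together, the image of the set in the numerator of \rmref{eq2.6} under $x\mapsto V^t x$ is precisely the set in the numerator of \rmref{eq2.7}, and since $V^t$ is volume preserving the two measures coincide. Dividing by the common denominator yields the claimed equality, and since $\Sigma$ only carries the singular values of $A$, the dependence of the ratio is reduced to those quantities alone.

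I do not expect any real obstacle: the argument is a one-line change of variables once the invariances are made explicit. The only point that deserves a sentence of justification is the identity $\|A\|=\|\Sigma\|$, but this follows immediately from the fact that the spectral norm equals the largest singular value, and hence is the same for $A$ and for $\Sigma$.
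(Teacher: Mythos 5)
Your proposal is correct and follows essentially the same route as the paper's own proof: both rely on the orthogonal invariance of the Euclidean norm (giving $\|Ax\|=\|\Sigma V^tx\|$, $\|V^tx\|=\|x\|$, and $\|A\|=\|\Sigma\|$) together with the invariance of Lebesgue volume under the orthogonal change of variables $y=V^tx$. No gaps; your explicit remark justifying $\|A\|=\|\Sigma\|$ is a welcome touch the paper leaves implicit.
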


\begin{proof}
As the multiplication with the orthogonal matrices 
$U$ and $V^t$, respectively, does not change the 
euclidean norm of a vector, the set of all 
$x\in\mathbb{R}^n$ for which 
\begin{displaymath}
\|Ax\|<\delta\,\|A\|\|x\|,\quad \|x\|\leq R,
\end{displaymath}
holds coincides with the set of all $x$ for which 
we have
\begin{displaymath}
\|\Sigma V^tx\|<\delta\,\|\Sigma\|\|V^tx\|,
\quad \|V^tx\|\leq R.
\end{displaymath}
As the volume is invariant to orthogonal 
transformations, the proposition follows.
\end{proof}

Orthogonal projections, or in other words matrices 
with one as the only singular value, represent one of 
the few cases for which the volume ratios (\ref{eq2.6})
can be more or less explicitly calculated. Orthogonal 
projections are of particular importance and will, 
as said, serve as the anchor for many of our estimates. 
Again, it suffices to consider the corresponding 
diagonal matrices $\Sigma$, denoted in the 
following by $P$.

\begin{theorem}     \label{thm2.4}
Let $P$ be the $(m\times n)$-matrix that extracts 
from a vector in $\mathbb{R}^n$ its first $m$ 
components. For $0\leq\delta<1$ and all radii 
$R>0$, then
\begin{equation}    \label{eq2.8}
\frac{\lambda\big(
\big\{x\,\big|\,\|Px\|<\delta\,\|x\|,\,\|x\|\leq R\big\}\big)}
{\lambda\big(\big\{x\,\big|\,\|x\|\leq R\big\}\big)}
=\psi\bigg(\frac{\delta}{\sqrt{1-\delta^2}}\bigg)
\end{equation}
holds, where the function $\psi$ is defined 
by the integral expression
\begin{equation}    \label{eq2.9}
\psi(\varepsilon)=
\frac{2\,\Gamma(n/2)}{\Gamma(m/2)\Gamma((n-m)/2)}
\int_0^{\varepsilon}\frac{t^{m-1}}{(1+t^2)^{n/2}}\dt.
\end{equation}
\end{theorem}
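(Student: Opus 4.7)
The plan is to apply Lemma~\ref{thm2.1} with the rotationally symmetric Gaussian weight~\rmref{eq2.5}, then reduce the resulting $n$-dimensional integral to a one-dimensional integral by separating the first $m$ and the last $n-m$ coordinates, passing to polar coordinates in each block, and evaluating the radial integrals via the Gamma function.

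First I would observe that $\|P\|=1$ and that the condition $\|Px\|<\delta\,\|x\|$ can be rewritten in the coordinates $x=(y,z)$ with $y\in\mathbb{R}^m$ and $z\in\mathbb{R}^{n-m}$ as $\|y\|^2<\delta^2(\|y\|^2+\|z\|^2)$, equivalently $\|y\|<\varepsilon\|z\|$ with $\varepsilon=\delta/\sqrt{1-\delta^2}$. By Lemma~\ref{thm2.1}, the quantity we want equals
\begin{equation*}
\Big(\frac{1}{\sqrt{\pi}}\Big)^n\!\!\int_{\mathbb{R}^{n-m}}\!\!\int_{\mathbb{R}^m}\chi_{\{\|y\|<\varepsilon\|z\|\}}
\,\mathrm{e}^{-\|y\|^2}\mathrm{e}^{-\|z\|^2}\dy\dx,
\end{equation*}
which already exploits independence of the $R$-parameter guaranteed by Lemma~\ref{thm2.2}.

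Next I would reduce each of the inner and outer Euclidean integrals to a radial one using~\rmref{eq2.2}: setting $s=\|y\|$ and $t=\|z\|$, the indicator becomes $\chi_{\{s<\varepsilon t\}}$ and the above integral turns into
\begin{equation*}
\frac{m\nu_m\,(n-m)\nu_{n-m}}{\pi^{n/2}}\int_0^\infty\!\!\int_0^{\varepsilon t}\!\mathrm{e}^{-s^2-t^2}\,s^{m-1}t^{n-m-1}\ds\dt.
\end{equation*}
The substitution $s=ut$ then factors the integrand cleanly into a $u$-part supported on $[0,\varepsilon]$ and a $t$-part that can be collapsed by the Gamma-function identity $\int_0^\infty t^{n-1}\mathrm{e}^{-(1+u^2)t^2}\dt=\tfrac{1}{2}(1+u^2)^{-n/2}\Gamma(n/2)$, producing precisely $\int_0^\varepsilon u^{m-1}(1+u^2)^{-n/2}\,\diff{u}$ up to constants.

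What remains is purely bookkeeping of constants: using $k\nu_k=2\pi^{k/2}/\Gamma(k/2)$ for $k=m$ and $k=n-m$, the prefactor simplifies to $2\Gamma(n/2)/\big(\Gamma(m/2)\Gamma((n-m)/2)\big)$, matching~\rmref{eq2.9}. I do not anticipate a genuine obstacle — the content of the theorem is a calculation — but the one step where it is easy to slip is the substitution $s=ut$ together with collecting the powers of $t$ (there are three sources: $s^{m-1}$, the Jacobian, and $t^{n-m-1}$) before performing the Gaussian integral; this is where any sign or exponent error would appear.
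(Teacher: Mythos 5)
Your proof is correct and structurally the same as the paper's: both split $\mathbb{R}^n$ into the first $m$ and the last $n-m$ coordinates, pass to polar coordinates in each block, and rescale the inner radial variable ($s=ut$, resp.\ $t=r/s$) to decouple the two integrals, after which the constants collapse via $k\nu_k=2\pi^{k/2}/\Gamma(k/2)$. The only difference is the choice of rotationally symmetric weight in Lemma~\ref{thm2.1}: the paper takes the normalized indicator of the unit ball (Lemma~\ref{thm2.2}), so the outer radial integral is the elementary power integral $\int_0^{(1+u^2)^{-1/2}}s^{n-1}\,\mathrm{d}s=\tfrac{1}{n}(1+u^2)^{-n/2}$, whereas your Gaussian weight requires the identity $\int_0^\infty t^{n-1}\mathrm{e}^{-(1+u^2)t^2}\,\mathrm{d}t=\tfrac12(1+u^2)^{-n/2}\Gamma(n/2)$ --- both routes land on exactly \rmref{eq2.9}, and your bookkeeping of the powers of $t$ and of the prefactor checks out.
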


\begin{proof}
Differing from the notation in the theorem 
but consistent within the proof, we split 
the vectors in $\mathbb{R}^n$ into parts 
$x\in\mathbb{R}^m$ and $y\in\mathbb{R}^{n-m}$.
The set whose volume has to be calculated 
consists then of the points in the given 
ball for which
\begin{displaymath}
\|x\|<\delta\,\Big\|\xy\Big\|
\end{displaymath}
or, resolved for the norm of the component
$x\in\mathbb{R}^m$,
\begin{displaymath}
\|x\|<\varepsilon\,\|y\|, \quad
\varepsilon=\frac{\delta}{\sqrt{1-\delta^2}},
\end{displaymath}
holds. For homogeneity reasons, that is, by
Lemma~\ref{thm2.2}, we can restrict ourselves 
to the ball of radius $R=1$. The volume can 
then be expressed as double integral 
\begin{displaymath}
\int\bigg(\int H\big(\varepsilon\|y\|-\|x\|\big)
\chi\big(\|x\|^2+\|y\|^2\big)\dx\bigg)\dy,
\end{displaymath}
where $H(t)=0$ for $t\leq0$, $H(t)=1$ for $t>0$,
$\chi(t)=1$ for $t\leq 1$, and $\chi(t)=0$ for 
arguments $t>1$. In terms of polar coordinates, 
that is, by (\ref{eq2.2}), it reads as
\begin{displaymath}
(n-m)\nu_{n-m}\int_0^\infty\bigg(
m\nu_m\int_0^{\varepsilon s}\chi\big(r^2+s^2\big)r^{m-1}\dr
\bigg)s^{n-m-1}\ds,
\end{displaymath}
with $\nu_d$ the volume of the $d$-dimensional unit 
ball. Substituting $t=r/s$ in the inner integral,
the upper bound becomes independent of $s$ and 
the integral can be written~as
\begin{displaymath}
(n-m)\nu_{n-m}\int_0^\infty\bigg(m\nu_m\,s^m\!
\int_0^{\varepsilon}\chi\big(s^2(1+t^2)\big)t^{m-1}\dt
\bigg)s^{n-m-1}\ds,
\end{displaymath}
and interchanging the order of integration, 
it attains finally the value
\begin{displaymath}
\frac{(n-m)\nu_{n-m}\,m\nu_m}{n}
\int_0^{\varepsilon}\frac{t^{m-1}}{(1+t^2)^{n/2}}\dt.
\end{displaymath}
Dividing this by the volume $\nu_n$ of the 
unit ball itself and remembering that 
\begin{displaymath}
\nu_d=\frac{2}{d}\,\frac{\pi^{d/2}}{\Gamma(d/2)},
\end{displaymath}
this completes the proof of the theorem.
\end{proof}

The following lemma describes the dependence of the 
volume ratio (\ref{eq2.8}) on the dimensions $m$ and 
$n$. In conjunction with Theorem~\ref{thm3.2} below 
it can be used to enclose the volume ratio from 
both sides for uneven differences of the dimensions.

\begin{lemma}       \label{thm2.5}
The volume ratio \rmref{eq2.8} decreases, for $n$ kept 
fixed, when $m$ increases, and it increases, for $m$ 
kept fixed, when $n$ increases.
\end{lemma}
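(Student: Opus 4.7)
The plan is to convert the geometric volume ratio into a Gaussian probability, where monotonicity becomes transparent. By Lemma~\ref{thm2.1} applied to the normalized Gauss weight \rmref{eq2.5}, the volume ratio on the left-hand side of \rmref{eq2.8} equals
\begin{displaymath}
\int\chi(x)W(x)\dx\,=\,\mathbb{P}\big(\|PZ\|<\delta\,\|Z\|\big),
\end{displaymath}
where $Z=(Z_1,\dots,Z_n)$ is a vector of independent centered Gaussians (the variance being irrelevant, as the inequality is scale invariant in $Z$). Because $P$ extracts the first $m$ components, this probability equals
\begin{displaymath}
\mathbb{P}\big(Z_1^2+\cdots+Z_m^2<\delta^2(Z_1^2+\cdots+Z_n^2)\big).
\end{displaymath}

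With this representation in hand, both monotonicity statements become pointwise inclusions of events on a common probability space carrying an infinite sequence $Z_1,Z_2,\dots$ of independent standard normals. Fixing $n$ and passing from $m$ to $m+1$ enlarges the left-hand side of the defining inequality without touching the right-hand side, so the event shrinks and its probability cannot increase; iterating yields the first claim. Fixing $m$ and passing from $n$ to $n+1$ enlarges the right-hand side without touching the left-hand side, so the event grows and its probability cannot decrease; iterating yields the second claim. Strictness, if desired, follows from the fact that each of the ``annular'' sets on which exactly one of the two events occurs has positive Lebesgue measure.

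The crux is thus the reduction, via Lemma~\ref{thm2.1}, to a single probability space on which the varying dimensions correspond merely to varying index ranges in the sums of squares; once this is done, nothing remains but the elementary monotone inclusion of events, and I expect this to be the only real content of the argument. A purely analytic attack on the closed form \rmref{eq2.9} would, by contrast, have to absorb the dimensional dependence of the Beta-type prefactor $\Gamma(n/2)/[\Gamma(m/2)\Gamma((n-m)/2)]$ and of the integrand $t^{m-1}(1+t^2)^{-n/2}$ simultaneously, which is doable but considerably more cumbersome.
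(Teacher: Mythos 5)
Your proof is correct and is essentially the paper's own argument: the nontrivial direction (increasing $n$) is handled in the paper by exactly the same device, namely Lemma~\ref{thm2.1} with the Gaussian weight \rmref{eq2.5} plus Fubini to pass to $\mathbb{R}^{n+p}$, which is what your single infinite product probability space encodes. The only cosmetic difference is that for fixed $n$ and increasing $m$ the paper uses the direct set inclusion in $\mathbb{R}^n$ rather than the Gaussian representation, but the underlying monotone-inclusion idea is identical.
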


\begin{proof}
The set in the numerator on the left-hand side of (\ref{eq2.8}) 
gets smaller when $m$ gets larger. This proves the first 
proposition. The argumentation for increasing dimension $n$ 
is more involved. It is based on the representation from 
Lemma~\ref{thm2.1} with the weight function  (\ref{eq2.5}). 
For $x\in\mathbb{R}^n$ and $y\in\mathbb{R}^p$, let
\begin{displaymath}
\chi(x,y)=
\begin{cases}
\,1,& \text{if $\|Px\|^2<\delta^2\big(\|x\|^2+\|y\|^2\big)$}\\
\,0,& \text{otherwise}.
\end{cases}
\end{displaymath}
By Lemma~\ref{thm2.1}, the volume ratio (\ref{eq2.8}) 
can then be written as the integral
\begin{displaymath} 
\Big(\frac{1}{\sqrt{\pi}}\Big)^n\!
\int\chi(x,0)\exp\big(-\|x\|^2\big)\dx
\end{displaymath}
over the $\mathbb{R}^n$. This integral takes
by Fubini's theorem the same value as the
integral
\begin{displaymath} 
\Big(\frac{1}{\sqrt{\pi}}\Big)^{n+p}\!
\int\chi(x,0)\exp\big(-\big(\|x\|^2+\|y\|^2\big)\big)\dxy
\end{displaymath}
over the $\mathbb{R}^n\times\mathbb{R}^p$ and 
can be estimated from above by the integral
\begin{displaymath} 
\Big(\frac{1}{\sqrt{\pi}}\Big)^{n+p}\!
\int\chi(x,y)\exp\big(-\big(\|x\|^2+\|y\|^2\big)\big)\dxy.
\end{displaymath}
This integral takes by Lemma~\ref{thm2.1} the same value 
as the volume ratio (\ref{eq2.8}) , with the original 
dimension $m$, but with the dimension $n$ now 
replaced by $n'=n+p$.
\end{proof}

\pagebreak  

The volume ratios (\ref{eq2.6}) can be bounded 
from both sides in terms of the, as we will see, 
more or less explicitly known volumes ratios 
(\ref{eq2.8}), i.e., of the function (\ref{eq2.9}).

\begin{theorem}     \label{thm2.6}              
Let $n>m$, let $A$ be an $(m\times n)$-matrix of full
rank, and let $\kappa$ be the condition number of 
$A$, the ratio of its maximum to its minimum singular 
value. If $\kappa\delta$ is less than one, then for 
all radii $R>0$ the upper estimate
\begin{equation}    \label{eq2.10}
\frac
{\lambda\big(\big\{x\,\big|\,\|Ax\|<\delta\,\|A\|\|x\|,\,\|x\|\leq R\big\}\big)}
{\lambda\big(\big\{x\,\big|\,\|x\|\leq R\big\}\big)} 
\leq 
\psi\bigg(\frac{\kappa\delta}{\sqrt{1-\kappa^2\delta^2}}\bigg)
\end{equation}
holds, where $\psi$ is the function \rmref{eq2.9}.
Without further conditions to $\delta<1$, conversely
\begin{equation}    \label{eq2.11}
\frac
{\lambda\big(\big\{x\,\big|\,\|Ax\|<\delta\,\|A\|\|x\|,\,\|x\|\leq R\big\}\big)}
{\lambda\big(\big\{x\,\big|\,\|x\|\leq R\big\}\big)} 
\geq 
\psi\bigg(\frac{\delta}{\sqrt{1-\delta^2}}\bigg).
\end{equation}
holds. For orthogonal projections, that is, 
if $\kappa=1$, in both cases equality holds.
\end{theorem}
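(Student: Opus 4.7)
The plan is to reduce the general case to the orthogonal projection case already treated in Theorem~\ref{thm2.4}, and then to obtain the two bounds by a simple sandwich estimate between $\|\Sigma x\|$ and $\|Px\|$, where $\Sigma$ is the diagonal matrix of singular values and $P$ is the coordinate projection onto the first $m$ components.

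First, by Lemma~\ref{thm2.3} I may assume $A=\Sigma=\mathrm{diag}(\sigma_1,\dots,\sigma_m)$ with $\sigma_1\geq\dots\geq\sigma_m>0$, so that $\|A\|=\sigma_1$ and $\kappa=\sigma_1/\sigma_m$. Writing a vector $x\in\mathbb{R}^n$ in the block form with components $u\in\mathbb{R}^m$ (the first $m$ coordinates) and $v\in\mathbb{R}^{n-m}$, I have
\begin{equation*}
\sigma_m^2\|u\|^2\,\leq\,\|\Sigma x\|^2\,=\,\sum_{i=1}^m\sigma_i^2u_i^2\,\leq\,\sigma_1^2\|u\|^2,
\qquad \|Px\|=\|u\|.
\end{equation*}
These two elementary inequalities are the entire engine of the proof; each one produces one of the required bounds.

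For the upper estimate, suppose $\|\Sigma x\|<\delta\,\sigma_1\|x\|$. The left inequality above, together with $\sigma_m=\sigma_1/\kappa$, gives $\|u\|^2<\kappa^2\delta^2\|x\|^2$, i.e.\ $\|Px\|<\kappa\delta\,\|x\|$. Thus the set appearing in the numerator of \rmref{eq2.10} is contained in the analogous set for the projection $P$ with $\delta$ replaced by $\kappa\delta$, and under the hypothesis $\kappa\delta<1$ I may invoke Theorem~\ref{thm2.4} to bound its volume ratio from above by $\psi\bigl(\kappa\delta/\sqrt{1-\kappa^2\delta^2}\bigr)$. For the lower estimate, the right inequality shows that $\|Px\|<\delta\,\|x\|$ implies $\|\Sigma x\|^2<\delta^2\sigma_1^2\|x\|^2$, so the set in the numerator of \rmref{eq2.11} contains the analogous set for $P$ with the same $\delta$; Theorem~\ref{thm2.4} now furnishes the claimed lower bound $\psi\bigl(\delta/\sqrt{1-\delta^2}\bigr)$. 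If $\kappa=1$ the two sandwiching inequalities collapse to equalities and both bounds agree with the exact volume ratio.

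I do not anticipate a serious obstacle: Lemma~\ref{thm2.3} handles the orthogonal invariance, Theorem~\ref{thm2.4} supplies the explicit formula for the projection case, and the rest is a direct translation of $\|\Sigma x\|\lessgtr\delta\sigma_1\|x\|$ into a condition on $\|Px\|/\|x\|$ via the two-sided estimate above. The only point requiring minor care is the condition $\kappa\delta<1$, which is exactly what is needed so that the upper-bound argument produces a nontrivial condition $\|Px\|<\kappa\delta\,\|x\|$ on which $\psi$ is evaluated inside its natural domain.
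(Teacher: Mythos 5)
Your proposal is correct and follows essentially the same route as the paper: reduce to the diagonal matrix $\Sigma$ via Lemma~\ref{thm2.3}, sandwich $\|\Sigma x\|$ between $\sigma_{\min}\|Px\|$ and $\sigma_{\max}\|Px\|$, and compare the resulting sets with those of Theorem~\ref{thm2.4}. You merely spell out the set inclusions that the paper leaves as ``follows rather immediately,'' and your reversed ordering of the singular values is only a notational difference.
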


\begin{proof}
We can restrict ourselves in the proof to the diagonal
matrices $\Sigma$ from Lemma~\ref{thm2.3}. The proposition 
follows then rather immediately from the inequalities
\begin{displaymath}
\sigma_1\|Px\|\leq\|\Sigma x\|\leq\sigma_m\|Px\|
\end{displaymath}
and the fact that $\|\Sigma\|=\sigma_m$ comparing 
the corresponding volumes.
\end{proof}

Undoubtedly, (\ref{eq2.10}) and (\ref{eq2.11}) 
are despite their generality in many cases 
rather poor estimates because they largely ignore 
the underlying geometry. If the singular values 
$\sigma_1\leq\sigma_2\leq\cdots\leq\sigma_m$
of the matrix $A$ cluster around $\sigma_m>0$
or are, up to very few, even equal to $\sigma_m$, 
the following lemma opens a way out and 
provides a remedy.

\begin{lemma}       \label{thm2.7}
Let $\sigma_k=\sigma_m$ for all $k>m_0$ and let 
$P'$ be the matrix that extracts from a vector 
$x\in\mathbb{R}^n$ its components $x_k$, 
$m_0<k\leq m$. The volume ratio \rmref{eq2.6} 
is then less than or at most equal to the 
volume ratio
\begin{equation}    \label{eq2.12}
\frac
{\lambda\big(\big\{x\,\big|\,\|P'x\|<\delta\,\|x\|,\,\|x\|\leq R\big\}\big)}
{\lambda\big(\big\{x\,\big|\,\|x\|\leq R\big\}\big)}. 
\end{equation}
\end{lemma}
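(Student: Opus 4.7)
The plan is to reduce to the diagonal case via Lemma~\ref{thm2.3} and then establish the set inclusion
\begin{equation*}
\bigl\{x\,\big|\,\|\Sigma x\|<\delta\,\|\Sigma\|\|x\|,\,\|x\|\leq R\bigr\}
\;\subseteq\;
\bigl\{x\,\big|\,\|P'x\|<\delta\,\|x\|,\,\|x\|\leq R\bigr\},
\end{equation*}
from which the desired inequality between the volume ratios follows immediately by monotonicity of $\lambda$ and the fact that both sets are divided by the same denominator.

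First I would invoke Lemma~\ref{thm2.3} to replace $A$ by its associated diagonal matrix $\Sigma$ of singular values; by that lemma, the volume ratio \rmref{eq2.6} is unchanged. Since $\Sigma$ is $(m\times n)$-diagonal, one has the explicit formula $\|\Sigma x\|^2=\sum_{k=1}^m\sigma_k^2 x_k^2$, while the hypothesis that the singular values are ordered with $\sigma_m=\|\Sigma\|=\|A\|$ the largest gives $\sigma_k^2\geq 0$ for all $k\leq m_0$ and $\sigma_k^2=\sigma_m^2$ for $k>m_0$. Dropping the non-negative terms indexed by $k\leq m_0$ yields the pointwise lower bound
\begin{equation*}
\|\Sigma x\|^2\;\geq\;\sigma_m^2\sum_{k=m_0+1}^m x_k^2\;=\;\|\Sigma\|^2\,\|P'x\|^2.
\end{equation*}

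Taking square roots and dividing by $\|\Sigma\|>0$, any $x$ in the numerator set of the $\Sigma$-version of \rmref{eq2.6} satisfies $\|P'x\|\leq \|\Sigma x\|/\|\Sigma\|<\delta\,\|x\|$ and $\|x\|\leq R$, so it belongs to the numerator set of \rmref{eq2.12}. Monotonicity of Lebesgue measure then gives the claimed inequality between the ratios. There is no real obstacle here; the only subtlety is keeping straight that the singular value convention in the paper puts the largest value at index $m$, which is exactly what makes the discarded terms $\sigma_k^2 x_k^2$, $k\leq m_0$, contribute non-negatively to the lower bound on $\|\Sigma x\|^2$.
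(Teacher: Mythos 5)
Your proof is correct and follows exactly the paper's route: reduce to the diagonal matrix $\Sigma$ via Lemma~\ref{thm2.3}, observe $\sigma_m\|P'x\|\leq\|\Sigma x\|$ together with $\|\Sigma\|=\sigma_m$, and conclude by set inclusion. The paper states this in one line; you have merely spelled out the same argument in full detail.
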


\begin{proof}
This follows by Lemma~\ref{thm2.3} from 
$\sigma_m\|P'x\|\leq\|\Sigma x\|$ 
and $\|\Sigma\|=\sigma_m$.
\end{proof}

The volume ratio (\ref{eq2.12}) possesses then
a representation like that in Theorem~\ref{thm2.4},
where $m'=m-m_0$ replaces the dimension $m$. But 
above all the potentially disastrous influence of 
the condition number vanishes. As indicated, the 
argumentation can be generalized to the case that  
the ratio $\kappa'=\sigma_m/\sigma_k$ is small in 
comparison to $\kappa$ for an index $k=m_0+1$ 
that is small in comparison to $m$.

The example that we have here in mind arises 
in connection with the iterative solution of 
high-dimensional elliptic partial differential 
equations as sketched in the introduction. 
The dimensions of the matrices $A=T^t$ under 
consideration are  
\begin{equation}    \label{eq2.13}
m=3\times N, \quad n=3\times\frac{N(N+1)}{2}. 
\end{equation}
The vectors $x$ in $\mathbb{R}^{m}$ and 
$\mathbb{R}^{n}$, respectively, are partitioned
into subvectors $x_i\in\mathbb{R}^{3}$. The 
matrices $T$ map the parts $x_i$ of 
$x\in\mathbb{R}^m$ first to themselves and 
then to the differences $x_i-x_j$, $i<j$. 
If one thinks of the Schr\"odinger equation, 
the $x_i$ are associated with the positions 
of $N$ electrons or other particles. The 
structure of $T$ reflects then that approximate 
solutions are sought that are composed of 
products of orbitals, depending only on a 
single component $x_i$, and of geminals, 
functions of the differences $x_i-x_j$. 
The euclidean norm of the vector 
$Tx\in\mathbb{R}^n$ is given by
\begin{equation}    \label{eq2.14}
\|Tx\|^2=\sum_{i=1}^N\|x_i\|^2+
\frac12\sum_{i=1}^N\sum_{j=1}^N\|x_i-x_j\|^2
\end{equation}
or, after rearrangement, with the rank 
three map $T_0x=x_1+x_2+\cdots+x_m$ by
\begin{equation}    \label{eq2.15}
\|Tx\|^2=(N+1)\|x\|^2-\|T_0x\|^2.
\end{equation}
The square matrix $T^tT$ therefore has the 
eigenvalues $1$ and $N+1$ and only the 
first three singular values of the matrix 
$T^t$ differ from the last one. If only some 
of the differences $x_i-x_j$ are taken into 
account, the minimum singular value of the 
resulting matrix remains $\sigma_1=1$ and 
the maximum singular value and the ratio 
of the dimensions as well can be bounded 
in terms of the degrees of the vertices 
of the underlying graph \cite{Yserentant}. 
The spectral theory of graphs is itself 
a large field \cite{Brouwer-Haemers}, 
\cite{Cvetkovic-Rowlinson-Simic} of great
importance and has numerous applications.


\section{Exact representations for orthogonal projections}
\label{sec3}

\setcounter{equation}{0}
\setcounter{theorem}{0}

One of the primary aims of this paper is a detailed 
study of the volume ratio (\ref{eq2.8}), 
\begin{equation}    \label{eq3.1}
\psi\bigg(\frac{\delta}{\sqrt{1-\delta^2}}\bigg),
\quad 0\leq\delta<1,
\end{equation}
and of its limit behavior when the dimensions tend 
to infinity. The starting point is at first an 
integral representation of this expression, which 
can also serve as a basis for its approximate 
calculation via a quadrature formula.

\begin{theorem}     \label{thm3.1}
The expression \rmref{eq3.1} possesses for 
$0\leq\delta<1$ the representation
\begin{equation}    \label{eq3.2}
\psi\bigg(\frac{\delta}{\sqrt{1-\delta^2}}\bigg)
=\frac{2\,\Gamma(n/2)}{\Gamma(m/2)\Gamma((n-m)/2)}\,
\int_0^\delta(1-t^2)^\alpha t^{m-1}\dt,
\end{equation}
where the exponent $\alpha\geq -1/2$ is given by
\begin{equation}    \label{eq3.3}
\alpha=\frac{n-m-2}{2}
\end{equation}
and takes nonnegative values for dimensions
$n\geq m+2$.
\end{theorem}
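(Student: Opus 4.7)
The plan is to derive the identity (\ref{eq3.2}) from the definition (\ref{eq2.9}) of $\psi$ by a single change of variables in the integral that converts the rational integrand $t^{m-1}/(1+t^2)^{n/2}$ into the algebraic integrand $(1-s^2)^\alpha s^{m-1}$. The natural substitution to try is the inverse of the map $\delta\mapsto\delta/\sqrt{1-\delta^2}$ that already appears as the argument of $\psi$ in (\ref{eq3.1}), namely $t=s/\sqrt{1-s^2}$, with $s$ ranging in $[0,1)$.

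Under this substitution I would verify three things in order. First, the endpoints transform correctly: $t=0$ corresponds to $s=0$, and $t=\delta/\sqrt{1-\delta^2}$ corresponds to $s=\delta$, so the outer bounds in (\ref{eq3.2}) match. Second, one has $1+t^2=1/(1-s^2)$, which gives $(1+t^2)^{-n/2}=(1-s^2)^{n/2}$. Third, a short computation of $dt/ds$ yields $dt=(1-s^2)^{-3/2}\ds$. Collecting powers of $(1-s^2)$, the exponent is $n/2-(m-1)/2-3/2=(n-m-2)/2=\alpha$, and the $t^{m-1}$ factor contributes $s^{m-1}(1-s^2)^{-(m-1)/2}$, which is already absorbed in the bookkeeping. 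The integrand becomes exactly $s^{m-1}(1-s^2)^\alpha$, and renaming $s$ back to $t$ produces (\ref{eq3.2}).

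The auxiliary claim about $\alpha$ is essentially free: since $n$ and $m$ are integers with $n>m$, one has $n\geq m+1$, hence $\alpha=(n-m-2)/2\geq -1/2$, with nonnegativity precisely when $n\geq m+2$.

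There is no real obstacle here; the only place where care is needed is the derivative computation $dt/ds=(1-s^2)^{-3/2}$ and the arithmetic of exponents, since an off-by-one error in either would spoil the identification of the exponent as $(n-m-2)/2$. Because everything is elementary and the endpoints are benign for $\delta<1$, no convergence issues arise even in the boundary case $\alpha=-1/2$, where the factor $(1-t^2)^{-1/2}$ is integrable on $[0,\delta]$ for $\delta<1$.
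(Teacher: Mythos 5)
Your proposal is correct and is essentially the paper's own argument in a different guise: the paper differentiates $f(\delta)=g(\delta/\sqrt{1-\delta^2})$ and reintegrates from $f(0)=0$, which is exactly your substitution $t=s/\sqrt{1-s^2}$ read through the fundamental theorem of calculus, with the same key computation $\diff{t}/\diff{s}=(1-s^2)^{-3/2}$ and the same exponent bookkeeping yielding $\alpha=(n-m-2)/2$. No gaps.
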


\begin{proof}
For abbreviation, we introduce the function 
\begin{displaymath}
f(\delta)=g\bigg(\frac{\delta}{\sqrt{1-\delta^2}}\bigg),
\quad
g(\varepsilon)=
\int_0^{\varepsilon}\frac{t^{m-1}}{(1+t^2)^{n/2}}\dt,
\end{displaymath}
on the interval $0\leq\delta<1$. Its derivative 
is the continuous function
\begin{displaymath}
f'(\delta)=(1-\delta^2)^\alpha\delta^{m-1}.
\end{displaymath}
Because $f(0)=0$, it possesses therefore the 
representation
\begin{displaymath}
f(\delta)=\int_0^\delta(1-t^2)^\alpha t^{m-1}\dt.
\end{displaymath}
This already proves the proposition.
\end{proof}

The integral (\ref{eq3.2}) can by means of the 
substitution $t=\sin\varphi$  be transformed 
into an integral over a trigonometric polynomial 
and can thus be calculated in terms of elementary 
functions. This does, however, not help too much 
because of the inevitably arising cancellation 
effects as soon as one tries to evaluate the 
result numerically. Such problems can be avoided 
if $\alpha$ is an integer, that is, if $m$ and 
$n$ are both even or both odd. The volume ratio 
(\ref{eq3.1}) is then a polynomial in $\delta$ 
that is composed of positive terms, which can as 
such be summed up in a numerically stable way.

\begin{theorem}     \label{thm3.2}
If the difference $n-m$ of the dimensions
is even, the function
\begin{equation}    \label{eq3.4}
\psi\bigg(\frac{\delta}{\sqrt{1-\delta^2}}\bigg) 
=\;\sum_{j=0}^{k}\,
\frac{\Gamma(k+l+1)}{\Gamma(k-j+1)\Gamma(l+j+1)}\,
(1-\delta^2)^{k-j}\,\delta^{2(l+j)}
\end{equation}
is a polynomial of degree $n-2$ in $\delta$, 
where $k$ and $l$ are given by
\begin{equation}    \label{eq3.5}
k=\frac{n-m-2}{2},\quad l=\frac{m}{2}.
\end{equation}
\end{theorem}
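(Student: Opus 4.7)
The plan is to invoke Theorem~\ref{thm3.1}, which reduces the claim to the polynomial identity
\begin{equation*}
\frac{2\,\Gamma(n/2)}{\Gamma(m/2)\Gamma((n-m)/2)}\int_0^\delta(1-t^2)^k t^{m-1}\dt
\;=\;\sum_{j=0}^{k}c_j\,(1-\delta^2)^{k-j}\,\delta^{2(l+j)}
\end{equation*}
on the interval $0\le\delta<1$, where I abbreviate $c_j=\Gamma(k+l+1)/[\Gamma(k-j+1)\Gamma(l+j+1)]$ and $k$ is a nonnegative integer by the parity assumption. Since both sides vanish at $\delta=0$ (using $l=m/2>0$), it suffices to show that their derivatives with respect to $\delta$ coincide. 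The polynomial character and the degree assertion will then follow by inspection: every summand on the right is a polynomial because $k-j\ge0$, and the largest exponent, attained at $j=k$, equals $2(l+k)=n-2$.

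Differentiating the left-hand side is immediate by the fundamental theorem of calculus and yields $\frac{2\,\Gamma(n/2)}{\Gamma(m/2)\Gamma((n-m)/2)}(1-\delta^2)^k\delta^{m-1}$. For the right-hand side I would differentiate each summand, pull out a common factor $2\delta$, and rearrange the result as the difference of a sum with typical term $c_j(l+j)(1-\delta^2)^{k-j}\delta^{2(l+j)-2}$ and a sum with typical term $c_j(k-j)(1-\delta^2)^{k-j-1}\delta^{2(l+j)}$. The key observation is the coefficient recursion
\begin{equation*}
c_{j-1}(k-j+1)=c_j(l+j),\qquad 1\le j\le k,
\end{equation*}
which is immediate from the functional equation $\Gamma(x+1)=x\Gamma(x)$.

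This recursion makes the $(j-1)$st summand of the second sum cancel the $j$th summand of the first, so the two sums telescope (the boundary term at $j=k$ of the second sum vanishes by itself thanks to the factor $k-j$). Only the $j=0$ contribution of the first sum survives, and after one application of $\Gamma(l+1)=l\Gamma(l)$ it reproduces exactly the derivative computed for the left-hand side. The hard part, and essentially the only substantive calculation, will be spotting the telescoping structure and carrying out the Gamma-function bookkeeping without arithmetic slips; an alternative, slightly more laborious route would be induction on $k$ together with an integration by parts in the integral of Theorem~\ref{thm3.1}, of which the telescoping above can be viewed as the closed-form condensation.
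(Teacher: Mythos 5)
Your proposal is correct and follows essentially the same route as the paper: both reduce the claim to the integral representation of Theorem~\ref{thm3.1}, differentiate the proposed polynomial, and use the Gamma-function recursion $c_{j-1}(k-j+1)=c_j(l+j)$ to telescope the derivative down to $(1-\delta^2)^k\delta^{2l-1}$ times the right constant, with both sides vanishing at $\delta=0$. The only cosmetic difference is that the paper carries the telescoping for a general truncation index $\nu\le k$ and records the resulting remainder term, which it then reuses for the odd case in Theorem~\ref{thm3.3}.
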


\begin{proof}
Let $\nu<k+1$ be a nonnegative integer and 
set for $j=0,\ldots,\nu$
\begin{displaymath}
a_j=\frac{\Gamma(k+1)\Gamma(l)}{2\,\Gamma(k-j+1)\Gamma(l+j+1)}.
\end{displaymath}
Because of $2la_0=1$ and $(l+j)a_j-(k-j+1)a_{j-1}=\,0$ 
for $j=1,\ldots,\nu$, then
\begin{displaymath}
\frac{\diff{}}{\diff t}\,\bigg\{
\sum_{j=0}^\nu a_j\,(1-t^2)^{k-j}\,t^{2(l+j)}\bigg\} =\,
(1-t^2)^k\,t^{2l-1}+R_\nu(t)
\end{displaymath}
holds on the set of all $t$ between $-1$ and $1$, 
where the remainder is given by
\begin{displaymath}
R_\nu(t)=2\,(\nu-k)\,a_\nu\,(1-t^2)^{k-\nu-1}\,t^{2(l+\nu)+1}.
\end{displaymath}
If $k$ is, as in the present case, itself an 
integer and $\nu=k$ is chosen, this remainder 
vanishes. As the function possesses, in terms 
of the given $k$ and $l$, the representation
\begin{displaymath}
\psi\bigg(\frac{\delta}{\sqrt{1-\delta^2}}\bigg) =\;
\frac{2\,\Gamma(k+l+1)}{\Gamma(k+1)\Gamma(l)}\,
\int_0^{\delta}(1-t^2)^k\,t^{2l-1}\dt,
\end{displaymath} 
its derivative and that of the right-hand side 
of (\ref{eq3.4}) thus coincide. As both sides 
of this equation take at $\delta=0$ the value 
zero, this proves the proposition.
\end{proof}

For \mbox{$m=128$} and \mbox{$n=256$}, for example, the 
function (\ref{eq3.4}) takes for \mbox{$\delta\leq 1/4$} 
values less than \mbox{$1.90\cdot 10^{-42}$}, and even 
for \mbox{$\delta\leq 1/2$} still values less than 
\mbox{$6.95\cdot 10^{-10}$}. For \mbox{$m=1024$} and 
\mbox{$n=2048$}, these values fall to 
\mbox{$2.68\cdot 10^{-66}$} and 
\mbox{$3.54\cdot 10^{-325}$}, that is, de facto to 
zero. This clearly demonstrates the announced effect.

The coefficients in (\ref{eq3.4}) are rational 
numbers. They can be calculated recursively 
starting from the last one, which takes 
independent of $m$ and $n$ or $k$ and $l$ 
the value one.
Things become particularly simple when $m$ 
and $n$ are both even and $k$ and $l$ are 
then both integers. The representation 
(\ref{eq3.4}) then turns into the sum 
\begin{equation}    \label{eq3.6}
\psi\bigg(\frac{\delta}{\sqrt{1-\delta^2}}\bigg) 
=\;\sum_{j=l}^{k+l}\,\binom{k+l\,}{j}
(1-\delta^2)^{k+l-j}\,\delta^{2j}
\end{equation}
of Bernstein polynomials of order $k+l=n/2-1$ 
in the variable $\delta^2$. One can even go 
a step further. Let $\chi$ be a step function 
with values $\chi(t)=0$ for $t<m/n$ and 
$\chi(t)=1$ for $t>m/n$. The representation 
can then be considered as the approximation
\begin{equation}    \label{eq3.7}
\psi\bigg(\frac{\delta}{\sqrt{1-\delta^2}}\bigg) 
=\;\sum_{j=0}^{k+l}\,
\chi\bigg(\frac{j}{k+l}\bigg)\binom{k+l\,}{j}
(1-\delta^2)^{k+l-j}\,\delta^{2j}
\end{equation}
of $\chi$ by the Bernstein polynomial of order 
$n/2-1$ in the variable $\delta^2$. If the 
ratio of $m$ and $n$ is kept fixed, these 
polynomials tend at all points $\delta$ 
less than
\begin{equation}    \label{eq3.8}
\delta_0=\sqrt{\frac{m}{n}}
\end{equation}
to zero and at all points $\delta>\delta_0$ to one. 
The convergence is even uniform outside every open 
neighborhood of jump position $\delta_0$. This 
follows from the theory of Bernstein polynomials
\cite{Lorentz}, but also from the considerations in 
the next section and is from the random projection 
theorem a known fact. Figure~\ref{fig1} reflects 
this behavior.

\begin{figure}[t]   \label{fig1}
\includegraphics[width=0.93\textwidth]{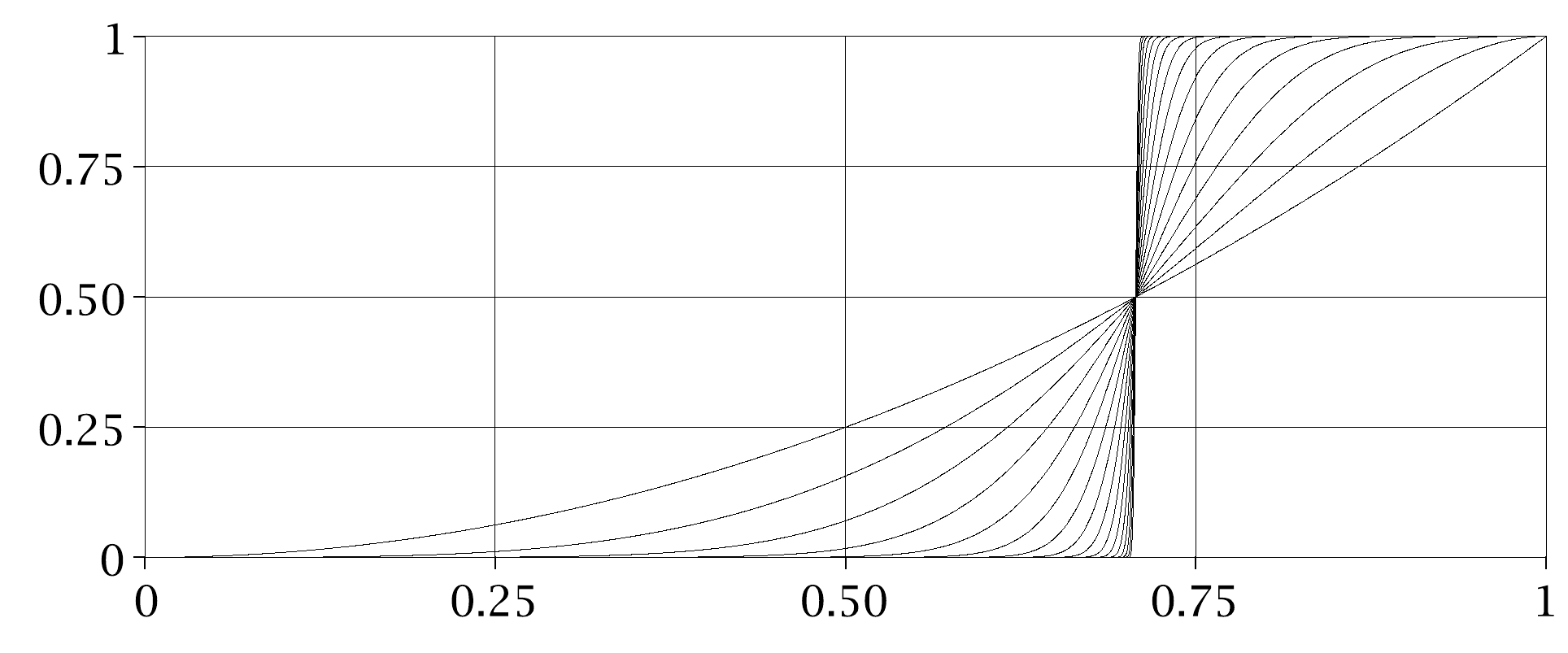}
\caption{The volume ratio \rmref{eq2.8} as function of
 $0\leq\delta<1$ for $m=2^k$, $k=1,\ldots,16$, and $n=2m$}
\end{figure}

If the difference $n-m$ of the dimensions is odd, 
the arguments from the proof of Theorem~\ref{thm3.2}
lead to a representation with an integral remainder
that can, however, in the given context almost
always be neglected.

\begin{theorem}     \label{thm3.3}
If the difference $n-m\geq 3$ of the two dimensions
is odd, if the quantities $k$ and~$l$ are 
defined as in the previous theorem, and 
if $\nu=k-1/2$ is set, 
\begin{equation}    \label{eq3.9}
\psi\bigg(\frac{\delta}{\sqrt{1-\delta^2}}\bigg) 
=\;\sum_{j=0}^{\nu}\,
\frac{\Gamma(k+l+1)}{\Gamma(k-j+1)\Gamma(l+j+1)}\,
(1-\delta^2)^{k-j}\,\delta^{2(l+j)}
+\Delta(\delta)
\end{equation}
holds, where the remainder possesses the
integral representation
\begin{equation}    \label{eq3.10}
\Delta(\delta)=
\frac{2}{\sqrt{\pi}}\frac{\Gamma(n/2)}{\Gamma((n-1)/2)}
\int_0^\delta\frac{t^{n-2}}{\sqrt{1-t^2}}\dt
\end{equation}
and satisfies the estimate 
$0\leq\Delta(\delta)\leq\delta^{n-1}$ on its 
interval of definition.
\end{theorem}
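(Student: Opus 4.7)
The plan is to mirror the proof of Theorem~\ref{thm3.2}, but now to truncate the sum at the largest integer strictly less than $k+1$. Since $n-m\geq 3$ is odd, $k=(n-m-2)/2$ is a positive half-integer, so $\nu=k-1/2$ is a non-negative integer, and the calculus identity from that proof,
\begin{displaymath}
\frac{\diff{}}{\diff t}\bigg\{\sum_{j=0}^\nu a_j\,(1-t^2)^{k-j}\,t^{2(l+j)}\bigg\}
=(1-t^2)^k\,t^{2l-1}+R_\nu(t),
\end{displaymath}
with $R_\nu(t)=2(\nu-k)\,a_\nu\,(1-t^2)^{k-\nu-1}\,t^{2(l+\nu)+1}$, still applies verbatim. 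Substituting $\nu-k=-1/2$, $k-\nu-1=-1/2$, and $2(l+\nu)+1=n-2$ collapses the remainder to $-a_\nu\,t^{n-2}/\sqrt{1-t^2}$, which is still integrable on $[0,\delta]$.

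I would then integrate the identity from $0$ to $\delta$; the boundary contributions at $t=0$ vanish because the exponents $2(l+j)$ are positive. Multiplying through by $2\Gamma(k+l+1)/(\Gamma(k+1)\Gamma(l))$ turns the principal integral into $\psi(\delta/\sqrt{1-\delta^2})$ exactly as in the proof of Theorem~\ref{thm3.2} and rewrites the $a_j$ as the asserted coefficients $\Gamma(k+l+1)/(\Gamma(k-j+1)\Gamma(l+j+1))$. For the remainder term, the same multiplier applied to $a_\nu$ collapses, via $\Gamma(k-\nu+1)=\Gamma(3/2)=\sqrt{\pi}/2$ and $\Gamma(l+\nu+1)=\Gamma((n-1)/2)$, to precisely the prefactor $(2/\sqrt{\pi})\,\Gamma(n/2)/\Gamma((n-1)/2)$ claimed for $\Delta(\delta)$.

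Non-negativity of $\Delta(\delta)$ is immediate from the non-negative integrand. The sharp upper bound $\Delta(\delta)\leq\delta^{n-1}$ is the delicate point, and I expect it to be the main obstacle, because $1/\sqrt{1-\delta^2}$ is unbounded as $\delta\to 1^-$ and a direct pointwise comparison of integrands fails. My approach is to establish monotonicity of the quotient $G(\delta)/\delta^{n-1}$ on $[0,1)$, where $G(\delta)=\int_0^\delta t^{n-2}/\sqrt{1-t^2}\dt$. Differentiating the quotient reduces monotonicity to the inequality $\delta^{n-1}/\sqrt{1-\delta^2}\geq(n-1)G(\delta)$, and a single integration by parts, antidifferentiating $(n-1)t^{n-2}$ to $t^{n-1}$, shows the difference to equal $\int_0^\delta t^n/(1-t^2)^{3/2}\dt\geq 0$. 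With monotonicity in hand, $\Delta(\delta)/\delta^{n-1}$ is bounded above by its limit as $\delta\to 1^-$; that limit equals $1$, since every factor $(1-\delta^2)^{k-j}$ appearing in \rmref{eq3.9} has exponent $k-j\geq k-\nu=1/2>0$ and so the polynomial sum vanishes at the boundary, while $\psi(\delta/\sqrt{1-\delta^2})\to 1$ by the normalization of $\psi$ implicit in Theorem~\ref{thm2.4}.
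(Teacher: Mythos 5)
Your argument is correct, and the derivation of the representation \rmref{eq3.9}--\rmref{eq3.10} is exactly what the paper intends: it re-uses the telescoping identity from the proof of Theorem~\ref{thm3.2} with $\nu=k-1/2$, and your bookkeeping of the remainder ($2(\nu-k)=-1$, $k-\nu-1=-1/2$, $2(l+\nu)+1=n-2$, and the collapse of the prefactor via $\Gamma(3/2)=\sqrt{\pi}/2$ and $l+\nu+1=(n-1)/2$) checks out. Where you diverge is the estimate $\Delta(\delta)\leq\delta^{n-1}$. The paper gets it in one line by substituting $t\mapsto\delta t$ in \rmref{eq3.10}, which exhibits $\Delta(\delta)/\delta^{n-1}$ as a constant times $\int_0^1 t^{n-2}(1-\delta^2t^2)^{-1/2}\dt$, manifestly nondecreasing in $\delta$ and hence bounded by $\Delta(1)=1$. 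You instead prove the same monotonicity of $\Delta(\delta)/\delta^{n-1}$ by differentiating the quotient and reducing the resulting inequality, via integration by parts, to the nonnegativity of $\int_0^\delta t^n(1-t^2)^{-3/2}\dt$; this is valid but considerably more laborious than the substitution, and your worry that the step is ``delicate'' evaporates once the substitution is seen. Both routes terminate in the evaluation $\Delta(1)=1$, which the paper asserts without comment; your justification via the vanishing of the polynomial part of \rmref{eq3.9} at $\delta=1$ and $\psi\to 1$ is a legitimate way to get it (a direct Beta-integral evaluation of \rmref{eq3.10} at $\delta=1$ is another), so nothing is missing.
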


\begin{proof}
By a simple substitution one obtains
\begin{displaymath}
\frac{\Delta(\delta)}{\delta^{n-1}}=
\frac{2}{\sqrt{\pi}}\frac{\Gamma(n/2)}{\Gamma((n-1)/2)}
\int_0^1\frac{t^{n-2}}{\sqrt{1-\delta^2t^2}}\dt
\leq \Delta(1).
\end{displaymath}
Because of $\Delta(1)=1$, the estimate
$\Delta(\delta)\leq\delta^{n-1}$ follows.
\end{proof}


\section{On the limit behavior for high space dimensions}
\label{sec4}

\setcounter{equation}{0}
\setcounter{theorem}{0}

\setcounter{figure}{1}

In this section, we derive bounds for the 
area ratios (\ref{eq1.2}) and the volume ratios 
(\ref{eq2.6}), respectively, with the aim to 
understand their behavior when the dimensions 
tend to infinity. The starting point is a result 
on general matrices. Its proof is based on the 
Markov inequality and once again on the 
separability of Gauss functions. 

\begin{theorem}     \label{thm4.1}
Let $0<\sigma_1\leq\sigma_2\leq\cdots\leq\sigma_m$ 
be the singular values of the matrix~$A$ under 
consideration. The volume ratio \rmref{eq2.6} can  
then be estimated as
\begin{equation}    \label{eq4.1}
\frac
{\lambda\big(\big\{x\,\big|\,\|Ax\|<\delta\,\|A\|\|x\|,\,\|x\|\leq R\big\}\big)}
{\lambda\big(\big\{x\,\big|\,\|x\|\leq R\big\}\big)} 
\leq \min_t X(t)
\end{equation}
by the minimum of the strictly convex function
\begin{equation}    \label{eq4.2}
X(t)=\bigg(\,\prod_{k=1}^m
\frac{1}{1-\delta^2\sigma_m^2\,t+\sigma_k^2\,t}\bigg)^{1/2}
\bigg(\frac{1}{1-\delta^2\sigma_m^2\,t}\bigg)^{(n-m)/2}
\end{equation}
over its interval $0\leq t<1/(\delta^2\sigma_m^2)$ 
of definition.
\end{theorem}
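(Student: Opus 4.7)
The plan is to combine Markov's inequality with the separability of the multivariate Gaussian weight. First, by Lemma~\ref{thm2.3} I may replace $A$ by the $(m\times n)$ diagonal matrix $\Sigma$ carrying the singular values $\sigma_1,\dots,\sigma_m$ in its first $m$ diagonal entries, so that $\|A\|=\sigma_m$ and $\|\Sigma x\|^2=\sum_{k=1}^m\sigma_k^2 x_k^2$. By Lemma~\ref{thm2.2} the volume ratio is independent of $R$, and by Lemma~\ref{thm2.1} applied with the rotationally symmetric Gaussian weight \rmref{eq2.5} it equals
\begin{equation*}
\Big(\frac{1}{\sqrt{\pi}}\Big)^n\!\int\chi(x)\exp(-\|x\|^2)\dx,
\end{equation*}
where $\chi$ is the characteristic function of the set on which the quadratic form $Y(x):=\delta^2\sigma_m^2\|x\|^2-\|\Sigma x\|^2$ is positive.

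Since $\chi(x)\leq e^{tY(x)}$ for every $t\geq 0$, Markov's inequality bounds the ratio by
\begin{equation*}
\Big(\frac{1}{\sqrt{\pi}}\Big)^n\!\int\exp\bigl(tY(x)-\|x\|^2\bigr)\dx.
\end{equation*}
Collecting coefficients, the exponent is the diagonal quadratic form
\begin{equation*}
-\sum_{k=1}^m\bigl(1-\delta^2\sigma_m^2\,t+\sigma_k^2\,t\bigr)x_k^2-\sum_{k=m+1}^n\bigl(1-\delta^2\sigma_m^2\,t\bigr)x_k^2,
\end{equation*}
which is negative definite precisely on the interval $0\leq t<1/(\delta^2\sigma_m^2)$ (the binding constraint comes from the $k>m$ block, since $\sigma_k^2-\delta^2\sigma_m^2\geq -\delta^2\sigma_m^2$). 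On this interval Fubini reduces the integral to a product of one-dimensional Gaussian integrals, each of the form $\int e^{-\beta x^2}\dx=\sqrt{\pi/\beta}$. The $n$ factors $\sqrt{\pi}$ cancel the prefactor $(1/\sqrt{\pi})^n$, and what remains is exactly $X(t)$. Taking the infimum over $t$ gives \rmref{eq4.1}.

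It remains to verify strict convexity. Taking logarithms,
\begin{equation*}
\log X(t)=-\frac{1}{2}\sum_{k=1}^m\log\bigl(1+(\sigma_k^2-\delta^2\sigma_m^2)\,t\bigr)-\frac{n-m}{2}\log\bigl(1-\delta^2\sigma_m^2\,t\bigr),
\end{equation*}
and each term $-\tfrac12\log(1+\gamma t)$ has second derivative $\tfrac12\gamma^2/(1+\gamma t)^2\geq 0$, strictly positive whenever $\gamma\neq 0$. Since the last block has coefficient $(n-m)/2>0$ and $\gamma=-\delta^2\sigma_m^2\neq 0$ (the case $\delta=0$ being trivial), $\log X$ is strictly convex, and then so is $X=\exp(\log X)$ by the identity $(e^f)''=e^f\bigl((f')^2+f''\bigr)$. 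The main obstacle is really only bookkeeping: keeping the admissible range of $t$ aligned with the sign conventions and checking that the Gaussian normalizations collapse cleanly onto the stated expression for $X(t)$.
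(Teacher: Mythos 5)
Your proof is correct and takes essentially the same route as the paper: reduction to the diagonal case via Lemma~\ref{thm2.3}, the Markov/Chernoff bound $\chi(x)\leq\exp(tY(x))$ against the Gaussian weight from Lemma~\ref{thm2.1}, and factorization of the resulting integral into one-dimensional Gaussian integrals valid exactly on $0\leq t<1/(\delta^2\sigma_m^2)$. The only cosmetic difference is that you verify strict convexity through the explicit logarithm of $X(t)$, whereas the paper notes that all even-order derivatives are positive by differentiating under the integral sign; both arguments are sound.
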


\begin{proof}
We can restrict ourselves by Lemma~\ref{thm2.3} as 
before again to the diagonal matrix $\Sigma$ with the 
entries $\sigma_1,\ldots,\sigma_m$. The characteristic 
function $\chi$ of the set of all $x$ for which 
\mbox{$\|\Sigma x\|<\delta\,\|\Sigma\|\|x\|$} holds 
satisfies, for any $t>0$, the crucial estimate
\begin{displaymath}      
\chi(x)<
\exp\big(\,t\,\big(\delta^2\|\Sigma\|^2\|x\|^2-\|\Sigma x\|^2\big)\big)
\end{displaymath}  
by a product of univariate functions.
By Lemma~\ref{thm2.1}, the subsequent remark, and 
Lemma~\ref{thm2.2}, the volume ratio (\ref{eq2.7}) 
can therefore be estimated by the integral
\begin{displaymath}      
\Big(\frac{1}{\sqrt{\pi}}\Big)^n\!\int
\exp\big(\,t\,\big(\delta^2\|\Sigma\|^2\|x\|^2-\|\Sigma x\|^2\big)\big)
\exp\big(-\|x\|^2\big)\dx
\end{displaymath}    
that remains finite for all $t$ in the given interval.
It splits into a product of one-dimensional integrals 
and takes, for given $t$, the value $X(t)$.  All 
even-order order derivatives of the function $X(t)$ 
are greater than zero, as follows by differentiation 
under the integral sign. The function is therefore,
in particular, strictly convex.
\end{proof}

Because the second-order derivative of $X(t)$ is 
greater than zero and $X'(t)$ tends to infinity as 
$t$ approaches the right endpoint of the interval, 
the first-order derivative of the function $X(t)$ 
possesses a then also unique zero $t^*>0$ if and 
only if
\begin{equation}    \label{eq4.3}
X'(0)=\frac{n}{2}\,\delta^2\sigma_m^2-\frac{1}{2}\sum_{k=1}^m\sigma_k^2
\end{equation}
is less than zero, or equivalently if $\delta$ 
satisfies the condition
\begin{equation}    \label{eq4.4}    
\bar{\kappa}\delta<\sqrt{\frac{m}{n}}, \quad
\frac{1}{\bar{\kappa}^2}=
\frac{1}{m}\sum_{k=1}^m\Big(\frac{\sigma_k}{\sigma_m}\Big)^2.
\end{equation}   
The strictly convex function $X(t)$ attains then and 
only then its minimum at a point in the interior of 
the interval and there then takes a value less than 
$X(0)=1$. Otherwise, the estimate (\ref{eq4.1}) is 
worthless and $X(t)\geq 1$ for all $t$ in the interval. 

\pagebreak  

The minimum of the function (\ref{eq4.2}) can in 
general only be calculated numerically, say by
some variant of Newton's method, and cannot be 
given in closed form. It is, however, 
comparatively simple to estimate this minimum 
from above and below.

\begin{lemma}       \label{thm4.2}
Let $\kappa=\sigma_m/\sigma_1$ again 
be the condition number of the matrix and let~$\xi$ 
be the square root of the dimension ratio $m/n$. 
If $\kappa\delta<\xi$, then the estimate
\begin{equation}    \label{eq4.5}
\min_t X(t)\leq\bigg(\,\frac{\kappa\delta}{\xi}\,
\bigg(\frac{1-\kappa^2\delta^2}{1-\xi^2}\bigg)^\gamma\;
\bigg)^m,
\quad \gamma=\frac{1-\xi^2}{2\xi^2},
\end{equation}
holds for the minimum of the function \rmref{eq4.2}.
Under the for condition numbers $\kappa>1$ weaker 
condition \rmref{eq4.4}, conversely the lower estimate
\begin{equation}    \label{eq4.6}
\min_t X(t)\geq\bigg(\,\frac{\bar{\kappa}\delta}{\xi}\,
\bigg(\frac{1-\bar{\kappa}^2\delta^2}{1-\xi^2}\bigg)^\gamma\;
\bigg)^m
\end{equation}
holds. For orthogonal projections, in both cases 
equality holds and it is
\begin{equation}    \label{eq4.7}
\min_t X(t)=\bigg(\,\frac{\delta}{\xi}\,
\bigg(\frac{1-\delta^2}{1-\xi^2}\bigg)^\gamma\;
\bigg)^m.
\end{equation}
\end{lemma}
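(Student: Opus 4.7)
The plan is to reduce the minimization of $X(t)$ to that of a simpler one-parameter family. Setting $u=\sigma_m^2 t$ and $\rho_k=\sigma_k/\sigma_m\in[1/\kappa,1]$, the function takes the form
$$
X(u) = \bigg(\prod_{k=1}^m \frac{1}{1+(\rho_k^2-\delta^2)u}\bigg)^{1/2} \bigg(\frac{1}{1-\delta^2 u}\bigg)^{(n-m)/2}.
$$
The auxiliary model to keep in mind is $F_c(u)=(1+cu)^{-m/2}(1-\delta^2 u)^{-(n-m)/2}$, parameterized by a single positive constant $c$. Differentiating $\log F_c$ and setting the derivative to zero gives the unique interior minimizer $u^*=(mc-(n-m)\delta^2)/(nc\delta^2)$, which lies in the admissible interval $0<u<1/\delta^2$ exactly when $c+\delta^2>\xi^{-2}\delta^2$, and a short calculation using $1+cu^*=m(c+\delta^2)/(n\delta^2)$ and $1-\delta^2u^*=(n-m)(c+\delta^2)/(nc)$ yields
$$
\min_u F_c(u) = \bigg(\frac{\delta^2}{\xi^2(c+\delta^2)}\bigg)^{m/2}\bigg(\frac{c}{(1-\xi^2)(c+\delta^2)}\bigg)^{(n-m)/2}.
$$
For orthogonal projections $\rho_k\equiv 1$, so $X=F_{1-\delta^2}$; since $c+\delta^2=1$ there and $(n-m)/(2m)=\gamma$, the formula collapses to (4.7), which verifies the equality claim.

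For the lower bound (4.6), I apply the arithmetic--geometric mean inequality to the $m$ positive factors of the product: for $u\ge 0$,
$$
\prod_{k=1}^m\bigl(1+(\rho_k^2-\delta^2)u\bigr)\leq\Bigl(1+(\bar\kappa^{-2}-\delta^2)u\Bigr)^m,
$$
because $\tfrac{1}{m}\sum_k\rho_k^2=\bar\kappa^{-2}$ by the definition of $\bar\kappa$ in (4.4). This gives $X(u)\ge F_{\bar\kappa^{-2}-\delta^2}(u)$ pointwise, hence $\min X\ge\min F_{\bar\kappa^{-2}-\delta^2}$. The admissibility condition $c+\delta^2>\xi^{-2}\delta^2$ with $c=\bar\kappa^{-2}-\delta^2$ is exactly $\bar\kappa\delta<\xi$, i.e., condition (4.4); substituting $c+\delta^2=\bar\kappa^{-2}$ and $c=\bar\kappa^{-2}(1-\bar\kappa^2\delta^2)$ into the closed-form above and regrouping powers (using $(n-m)/2=\gamma m$) produces (4.6).

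The upper bound (4.5) comes from the opposite pointwise estimate: since $\rho_k^2\ge 1/\kappa^2$, for $u\ge 0$ each factor satisfies $1+(\rho_k^2-\delta^2)u\ge 1+(\kappa^{-2}-\delta^2)u$, so $X(u)\le F_{\kappa^{-2}-\delta^2}(u)$ pointwise and $\min X\le\min F_{\kappa^{-2}-\delta^2}$ because the minimizer of the latter is feasible. The admissibility now reads $\kappa\delta<\xi$, and the same calculation with $c+\delta^2=\kappa^{-2}$ gives (4.5). In the orthogonal-projection case $\kappa=\bar\kappa=1$, the AM--GM step is an equality, so both bounds coincide with (4.7). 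Nothing in the argument is conceptually delicate; the main obstacle is the bookkeeping needed to solve for $u^*$ cleanly and to recognize the exponent $\gamma=(n-m)/(2m)=(1-\xi^2)/(2\xi^2)$ when collecting powers in the final rearrangement.
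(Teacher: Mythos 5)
Your proposal is correct and follows essentially the same route as the paper: the explicit minimization of the two-factor model function (giving (\ref{eq4.7})), the pointwise bound $\sigma_1\leq\sigma_k$ for the upper estimate, and the arithmetic--geometric mean inequality combined with the definition of $\bar{\kappa}$ in (\ref{eq4.4}) for the lower estimate. Packaging the computation once through the one-parameter family $F_c$ is only a cosmetic reorganization of the paper's "minimize the right-hand side as above" step.
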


\begin{proof}
The function (\ref{eq4.2}) reads in the case
of orthogonal projections, that is, if all
singular values of the matrix take the value 
$\sigma_k=1$, as
\begin{displaymath}
X(t)=
\bigg(\frac{1}{1-\delta^2 t+t}\bigg)^{m/2}
\bigg(\frac{1}{1-\delta^2 t}\bigg)^{(n-m)/2}.
\end{displaymath}
It attains its minimum at the point
\begin{displaymath}
t=\frac{\xi^2-\delta^2}{(1-\delta^2)\delta^2}
\end{displaymath}
in its interval $0<t<1/\delta^2$ of definition 
and takes there the value (\ref{eq4.7}). In 
the general case, the function (\ref{eq4.2}) 
satisfies, because of $\sigma_1\leq\sigma_k$,
the estimate
\begin{displaymath}
X(t)\,\leq\,\bigg(
\frac{1}{1-\delta^2\sigma_m^2\,t+\sigma_1^2\,t}\bigg)^{m/2}
\bigg(\frac{1}{1-\delta^2\sigma_m^2\,t}\bigg)^{(n-m)/2}.
\end{displaymath}
The upper estimate (\ref{eq4.5}) thus follows 
minimizing the right-hand side  as above as a
function of $t'=\sigma_1^2\,t$. The proof of 
the lower estimate is a little bit more involved. 
Since the geometric mean can be estimated by 
the arithmetic mean, 
\begin{displaymath}
\bigg(\,\prod_{k=1}^m
\big(1-\delta^2\sigma_m^2\,t+\sigma_k^2\,t\big)\bigg)^{1/m}
\!\leq\;
\frac{1}{m}\sum_{k=1}^m
\big(1-\delta^2\sigma_m^2\,t+\sigma_k^2\,t\big)
\end{displaymath}
holds. 
By the definition (\ref{eq4.4}) of $\bar{\kappa}$, 
this leads to the estimate
\begin{displaymath}
X(t)\,\geq\, 
\bigg(
\frac{1}{1-\delta^2\sigma_m^2\,t+\bar{\kappa}^{-2}\sigma_m^2\,t}\bigg)^{m/2}
\bigg(\frac{1}{1-\delta^2\sigma_m^2\,t}\bigg)^{(n-m)/2}.
\end{displaymath}
Minimizing the right-hand side as a function 
of $t'=\bar{\kappa}^{-2}\sigma_m^2\,t$,
one gets (\ref{eq4.6}).
\end{proof}

The question is how tight the derived inclusion
for the minimum of the function (\ref{eq4.2})
is in nontrivial cases, for condition numbers 
$\kappa>1$. The answer is that there is practically 
no room for improvement without additional 
conditions to the singular values. Consider a 
sequence of matrices with fixed dimension ratios 
$\xi^2=m/n$ and fixed condition number $\kappa$ 
and let $\kappa\delta<\xi$. Assume that 
$\bar{\kappa}$ tends to $\kappa$ as $m$ goes 
to infinity. This is, for example, the case 
if $\sigma_k=\sigma_1$ for $k=1,\ldots,m-1$. 
The rates
\begin{equation}    \label{eq4.8}
\frac{\bar{\kappa}\delta}{\xi}\,
\bigg(\frac{1-\bar{\kappa}^2\delta^2}{1-\xi^2}\bigg)^\gamma,
\quad \frac{\kappa\delta}{\xi}\,
\bigg(\frac{1-\kappa^2\delta^2}{1-\xi^2}\bigg)^\gamma
\end{equation}
then approach each other arbitrarily as 
$m$ goes to infinity. If additionally
\begin{equation}    \label{eq4.9}
\bar{\kappa}=\kappa-\frac{\kappa_1}{m}+o\Big(\frac{1}{m}\Big)
\end{equation}
holds with some positive constant $\kappa_1$,  
the ratio of the two bounds enclosing the 
minimum of the function (\ref{eq4.2}) tends 
to a limit value greater than zero.

The bound (\ref{eq4.5}) can be simplified, and 
the minimum of the function (\ref{eq4.2}) be 
further estimated in terms of the function
\begin{equation}    \label{eq4.10}
\phi(\vartheta) =
\vartheta\,\exp\bigg(\frac{1-\vartheta^2}{2}\bigg),
\end{equation}
which increases on the interval $0\leq\vartheta\leq 1$ 
strictly, attains at the point $\vartheta=1$ its 
maximum value one, and decreases from there again 
strictly.

\begin{lemma}       \label{thm4.3}
As long as $\kappa\delta$ is less than
the square root $\xi$ of $m/n$, one has
\begin{equation}    \label{eq4.11}
\min_t X(t)\leq\phi\bigg(\frac{\kappa\delta}{\xi}\bigg)^m.
\end{equation}
\end{lemma}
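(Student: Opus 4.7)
The plan is to start from the upper bound already established in Lemma~\ref{thm4.2},
$$\min_t X(t)\leq\bigg(\,\frac{\kappa\delta}{\xi}\,
\bigg(\frac{1-\kappa^2\delta^2}{1-\xi^2}\bigg)^\gamma\bigg)^m,
\quad \gamma=\frac{1-\xi^2}{2\xi^2},$$
and to show that the base of this $m$-th power is already bounded above by $\phi(\kappa\delta/\xi)$. Since both expressions share the prefactor $\kappa\delta/\xi>0$, the task reduces to proving the scalar inequality
$$\bigg(\frac{1-\kappa^2\delta^2}{1-\xi^2}\bigg)^\gamma
\;\leq\;\exp\bigg(\frac{1-(\kappa\delta/\xi)^2}{2}\bigg).$$

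The key step is a change of variable. Setting $u=(1-\kappa^2\delta^2)/(1-\xi^2)$, the assumption $\kappa\delta<\xi$ (equivalently $\kappa^2\delta^2<\xi^2<1$) guarantees $u>1$ and also $1-\xi^2>0$. Taking logarithms and multiplying through by $2\xi^2$, the inequality to prove becomes
$$(1-\xi^2)\log u\;\leq\;\xi^2-\kappa^2\delta^2.$$
The right-hand side rewrites as $(1-\kappa^2\delta^2)-(1-\xi^2)=(1-\xi^2)(u-1)$, and dividing by the positive factor $1-\xi^2$ the inequality reduces to $\log u\leq u-1$.

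This last inequality is the standard logarithmic bound, valid for all $u>0$ with equality only at $u=1$, and is easily verified by noting that $f(u)=u-1-\log u$ satisfies $f(1)=0$, $f'(u)=1-1/u$, so $f$ is minimized at $u=1$. Raising the resulting scalar estimate to the $m$-th power then yields \rmref{eq4.11}.

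I do not expect a genuine obstacle here: the whole argument is a short algebraic manipulation combined with the universal bound $\log u\leq u-1$. The only care required is checking that the variable $u$ genuinely lies in the domain where this bound gives a nontrivial inequality, which is where the hypothesis $\kappa\delta<\xi$ enters: it ensures both $u>1$ and $1-\xi^2>0$, so that dividing by $1-\xi^2$ preserves the direction of the inequality.
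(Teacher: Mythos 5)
Your proof is correct. Both you and the paper start from the bound \rmref{eq4.5} of Lemma~\ref{thm4.2} and reduce the claim to the scalar inequality
\begin{displaymath}
\gamma\,\ln\bigg(\frac{1-\kappa^2\delta^2}{1-\xi^2}\bigg)\;\leq\;\frac{1-(\kappa\delta/\xi)^2}{2},
\end{displaymath}
but you prove it differently. The paper expands the left-hand side as a power series in $\xi^2$ whose leading term is exactly $(1-\vartheta^2)/2$ with $\vartheta=\kappa\delta/\xi$, and checks that all remaining coefficients are nonpositive contributions (indeed polynomial multiples of $(1-\vartheta^2)^2$), which incidentally quantifies how much is lost in passing from \rmref{eq4.5} to \rmref{eq4.11}. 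You instead substitute $u=(1-\kappa^2\delta^2)/(1-\xi^2)$ and observe that the inequality collapses to the universal bound $\ln u\leq u-1$ after noting $\xi^2-\kappa^2\delta^2=(1-\xi^2)(u-1)$; your route is shorter and more elementary, at the cost of the extra structural information the series expansion provides. One small remark: you only need $1-\xi^2>0$ (automatic from $m<n$) to divide without reversing the inequality; the bound $\ln u\leq u-1$ holds for all $u>0$, so the fact that $u>1$ under the hypothesis $\kappa\delta<\xi$ is not actually essential to your argument, though it does no harm.
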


\begin{proof}
Set $\kappa\delta/\xi=\vartheta$ for abbreviation.
The logarithm
\begin{displaymath}
\ln\bigg(\bigg(\frac{1-\kappa^2\delta^2}{1-\xi^2}\bigg)^\gamma\;\bigg)
=\,\frac{1-\xi^2}{2\xi^2}\,\ln\bigg(\frac{1-\vartheta^2\xi^2}{1-\xi^2}\bigg)
\end{displaymath}
then possesses, because of $\vartheta^2\xi^2<1$ and 
$\xi^2<1$, the power series expansion
\begin{displaymath}
\frac {1-\vartheta^2}{2}-\frac12\,\sum_{k=1}^\infty
\bigg(\frac{1-\vartheta^{2k}}{k}-\,\frac{1-\vartheta^{2k+2}}{k+1}\bigg)\xi^{2k}.
\end{displaymath}
Because the series coefficients are for all 
$\vartheta\geq 0$ greater than or equal to 
zero and, by the way, polynomial multiples of 
$(1-\vartheta^2)^2$, the proposition follows 
from (\ref{eq4.5}).
\end{proof}

In the following, we will use the estimate 
(\ref{eq4.11}) for the minimum of the 
function given by (\ref{eq4.2}). The next 
theorem is then a trivial conclusion from 
Theorem~\ref{thm4.1}.

\begin{theorem}     \label{thm4.4}              
Let $n>m$, let $A$ be an $(m\times n)$-matrix of full
rank $m$, let $\kappa$ be the condition number of 
$A$, and let $\xi$ be the square root of the
dimension ratio $m/n$. If $\kappa\delta$ is less 
than $\xi$, then for all radii $R>0$ one has
\begin{equation}    \label{eq4.12}
\frac
{\lambda\big(\big\{x\,\big|\,\|Ax\|<\delta\,\|A\|\|x\|,\,\|x\|\leq R\big\}\big)}
{\lambda\big(\big\{x\,\big|\,\|x\|\leq R\big\}\big)} 
\leq \phi\bigg(\frac{\kappa\delta}{\xi}\bigg)^m.
\end{equation}
\end{theorem}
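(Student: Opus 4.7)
My plan is to observe that Theorem 4.4 is a direct composition of two earlier results: Theorem 4.1 gives the upper bound on the volume ratio by $\min_t X(t)$, and Lemma 4.3 bounds $\min_t X(t)$ by $\phi(\kappa\delta/\xi)^m$ under exactly the hypothesis $\kappa\delta<\xi$ that we are assuming. So the proof is just a single chain of inequalities, with no new estimation work required.

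More concretely, I would first invoke Theorem 4.1 to assert
\begin{equation*}
\frac{\lambda\bigl(\bigl\{x\,\bigl|\,\|Ax\|<\delta\,\|A\|\|x\|,\,\|x\|\leq R\bigr\}\bigr)}
{\lambda\bigl(\bigl\{x\,\bigl|\,\|x\|\leq R\bigr\}\bigr)}
\;\leq\;\min_t X(t),
\end{equation*}
valid for all $R>0$ and independently of any condition on $\delta$. Then, since the hypothesis of Theorem 4.4 is exactly the hypothesis $\kappa\delta<\xi$ required by Lemma 4.3, I would apply that lemma to obtain $\min_t X(t)\leq\phi(\kappa\delta/\xi)^m$. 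Combining the two inequalities yields the claimed estimate (4.12).

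There is no genuine obstacle; the work has already been done in Theorem 4.1 and in the chain Lemma 4.2 $\Rightarrow$ Lemma 4.3. The only points worth flagging in a written proof are (i) confirming that the hypothesis $\kappa\delta<\xi$ is strong enough to guarantee $\delta<1$ (so that the original setup of $\chi$ and the singular-value based reductions make sense) and that it implies the looser condition (4.4) needed for $X(t)$ to have a genuine interior minimum, and (ii) noting that the bound (4.12) is stated for arbitrary $R>0$, which is fine because the bound from Theorem 4.1 is already $R$-independent. Both points are essentially bookkeeping, so the proof in the paper can reasonably be left as the one-line remark the authors give.
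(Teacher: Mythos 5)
Your proposal is correct and coincides with the paper's own argument: the paper states that Theorem~4.4 is a trivial conclusion from Theorem~4.1 combined with the estimate (4.11) of Lemma~4.3, which is exactly the two-step chain you describe. Your bookkeeping remarks (that $\kappa\delta<\xi$ forces $\delta<1$ since $\kappa\geq 1$, and that it implies the weaker condition (4.4) because $\bar{\kappa}\leq\kappa$) are accurate and only make explicit what the paper leaves implicit.
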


Consider a sequence of matrices with dimension 
ratios $m/n\geq\delta_0^2$ and condition 
numbers $\kappa\leq\kappa_0$. The volume 
ratios (\ref{eq2.6}) tend then for 
$\kappa_0\delta<\delta_0$ not slower than
\begin{equation}    \label{eq4.13}
\sim\,\phi\bigg(\frac{\kappa_0\delta}{\delta_0}\bigg)^m
\end{equation}
to zero as the dimensions go to infinity.
Under suitable conditions to the singular 
values, considerable improvements are possible.
In extreme cases, such as in Lemma~\ref{thm2.7}, 
the volume ratios (\ref{eq2.6}) can essentially 
be estimated as those for orthogonal projections
and the potentially devastating influence of 
the condition number vanishes.

\begin{theorem}     \label{thm4.5}
Let $n>m$ and let $A$ be a nonvanishing $(m\times n)$-matrix  
with singular values $\sigma_k=\sigma_m$ for $k>m_0$. If 
one sets $m'=m-m_0$ and $\xi'$ is the square root of $m'/n$, 
the volume ratio \rmref{eq2.6} satisfies then for 
$0\leq\delta<\xi'$  the estimate
\begin{equation}    \label{eq4.14}
\frac
{\lambda\big(\big\{x\,\big|\,\|Ax\|<\delta\,\|A\|\|x\|,\,\|x\|\leq R\big\}\big)}
{\lambda\big(\big\{x\,\big|\,\|x\|\leq R\big\}\big)} 
\leq \phi\bigg(\frac{\delta}{\xi'}\bigg)^{m'}.
\end{equation}
\end{theorem}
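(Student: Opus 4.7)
The plan is to reduce the claim to the orthogonal projection case and then quote Theorem~\ref{thm4.4}. The two ingredients already in the paper fit together cleanly: Lemma~\ref{thm2.7} lets us replace $A$ by the coordinate projector $P'$ that extracts the components $x_k$ for $m_0<k\leq m$, and Theorem~\ref{thm4.4} gives the desired bound for such a projector.

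First I would invoke Lemma~\ref{thm2.7}: under the hypothesis $\sigma_k=\sigma_m$ for $k>m_0$, the volume ratio on the left-hand side of \rmref{eq4.14} is bounded above by
\begin{displaymath}
\frac
{\lambda\big(\big\{x\,\big|\,\|P'x\|<\delta\,\|x\|,\,\|x\|\leq R\big\}\big)}
{\lambda\big(\big\{x\,\big|\,\|x\|\leq R\big\}\big)}.
\end{displaymath}
Second, I observe that $P'$ is itself an $(m'\times n)$-matrix of rank $m'$, all of whose singular values equal one. Hence $\|P'\|=1$, and the condition number of $P'$ is $\kappa=1$. The square root of the dimension ratio $m'/n$ is $\xi'$ by definition.

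Third, I apply Theorem~\ref{thm4.4} to the matrix $P'$ in place of $A$. The hypothesis $\kappa\delta<\xi$ of that theorem becomes $\delta<\xi'$, which is precisely the assumption of the present theorem. The conclusion reads
\begin{displaymath}
\frac
{\lambda\big(\big\{x\,\big|\,\|P'x\|<\delta\,\|P'\|\|x\|,\,\|x\|\leq R\big\}\big)}
{\lambda\big(\big\{x\,\big|\,\|x\|\leq R\big\}\big)}
\leq \phi\bigg(\frac{\delta}{\xi'}\bigg)^{m'},
\end{displaymath}
and since $\|P'\|=1$ this is exactly the bound we need. Concatenating with the first step gives \rmref{eq4.14}.

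There is essentially no obstacle: both key facts have been established and the theorem is a two-line composition. The only thing worth checking carefully is that $P'$ is genuinely nonvanishing (so that $\|P'\|=1$ and $\kappa$ is well-defined), which follows from $m'=m-m_0\geq 1$; this is implicit because otherwise the right-hand side $\phi(\delta/\xi')^{m'}$ degenerates. The argument does not depend on how large $m_0$ is relative to $m$, which is what makes the estimate useful precisely in the clustered-singular-value situation discussed after Lemma~\ref{thm2.7}: the condition number $\kappa$ of $A$ is wiped out and replaced by the harmless projection geometry.
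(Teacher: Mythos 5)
Your argument is correct and coincides with the paper's: the paper derives Theorem~\ref{thm4.5} from Lemma~\ref{thm2.7} together with Theorem~\ref{thm4.1} and Lemma~\ref{thm4.3}, and since Theorem~\ref{thm4.4} is exactly the packaging of those two results, routing through it applied to the projector $P'$ (with $\kappa=1$, $\|P'\|=1$, dimension $m'$) is the same proof. Your side remark that $m'\geq 1$ is needed for $P'$ to be nonvanishing is a reasonable point of care and does not affect the argument.
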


The proof results from Lemma~\ref{thm2.7},
Theorem~\ref{thm4.1}, and Lemma~\ref{thm4.3}.

Theorem~\ref{thm4.4} possesses a counterpart that deals
with values $\delta$ greater than the square root of
the ratio of the dimensions $m$ and $n$.

\begin{theorem}     \label{thm4.6}
Let $A$ be a nonvanishing $(m\times n)$-matrix and 
let $\xi$ be the square root of the dimension
ratio $m/n$. For $\xi<\delta\leq 1$ then one has 
\begin{equation}    \label{eq4.15}
\frac
{\lambda\big(\big\{x\,\big|\,\|Ax\|\geq\delta\,\|A\|\|x\|,\,\|x\|\leq R\big\}\big)}
{\lambda\big(\big\{x\,\big|\,\|x\|\leq R\big\}\big)} 
\leq \phi\bigg(\frac{\delta}{\xi}\bigg)^m.
\end{equation}
\end{theorem}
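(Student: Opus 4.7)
The plan is to parallel the chain Theorem~\ref{thm4.1} through Theorem~\ref{thm4.4}, but for the upper-tail event $\{\|Ax\|\geq\delta\,\|A\|\|x\|\}$ instead of the lower one. By Lemma~\ref{thm2.3} I may assume $A$ is the diagonal matrix $\Sigma$ of singular values. For any $t>0$, on the set in question one has the pointwise bound $1\leq\exp\!\big(t(\|\Sigma x\|^2-\delta^2\|\Sigma\|^2\|x\|^2)\big)$, so inserting this into the Gauss-weighted representation of the volume ratio (Lemma~\ref{thm2.1} with weight \rmref{eq2.5}) converts it into a separable $n$-dimensional Gaussian integral, evaluating to
\[
Y(t)=\prod_{k=1}^m\frac{1}{\sqrt{1+t(\delta^2\sigma_m^2-\sigma_k^2)}}\cdot\frac{1}{(1+t\delta^2\sigma_m^2)^{(n-m)/2}},
\]
whenever $0\leq t\sigma_m^2(1-\delta^2)<1$; if $\delta=1$ the first restriction disappears and $Y(t)\to 0$ as $t\to\infty$, giving the result trivially.

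Since $\sigma_k\leq\sigma_m$, each factor in the product is at least $1-t\sigma_m^2(1-\delta^2)$, and substituting $s=t\sigma_m^2$ yields the singular-value-free bound
\[
Y(t)\leq\frac{1}{(1-s(1-\delta^2))^{m/2}\,(1+s\delta^2)^{(n-m)/2}}.
\]
The right-hand side is the orthogonal-projection form. Differentiating in $s$ shows it has a unique minimum on $[0,1/(1-\delta^2))$ at $s^{*}=(\delta^2-\xi^2)/(\delta^2(1-\delta^2))$, and this point lies in the interior \emph{precisely} because $\delta>\xi$. Direct substitution gives
\[
Y(t^{*})\leq\left(\frac{\delta}{\xi}\left(\frac{1-\delta^2}{1-\xi^2}\right)^{\gamma}\right)^{m},\qquad \gamma=\frac{1-\xi^2}{2\xi^2}.
\]

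To replace this bound by $\phi(\delta/\xi)^m$, I would apply $\log r\leq r-1$ with $r=(1-\delta^2)/(1-\xi^2)\in(0,1]$, which gives $(1-\xi^2)\log\!\big((1-\delta^2)/(1-\xi^2)\big)\leq\xi^2-\delta^2$. Dividing by $2\xi^2$ and exponentiating yields $((1-\delta^2)/(1-\xi^2))^{\gamma}\leq\exp\!\big((1-(\delta/\xi)^2)/2\big)$, and multiplying by $\delta/\xi$ produces exactly $\phi(\delta/\xi)$. The main subtlety I expect is the sign bookkeeping: positivity of $s^{*}$ and the admissible range of $t$ are now governed by $\delta>\xi$, reversing the role of $\delta<\xi$ in Theorem~\ref{thm4.4}, and the power-series argument of Lemma~\ref{thm4.3} does not transfer since $\delta/\xi>1$ would produce series coefficients of the wrong sign. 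Replacing it by the one-line inequality $\log r\leq r-1$ sidesteps that obstacle cleanly.
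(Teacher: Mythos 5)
Your proof is correct and follows essentially the same route as the paper: a Chernoff-type bound against the Gaussian weight of Lemma~\ref{thm2.1}, separation into one-dimensional integrals, minimization at $s^{*}=(\delta^2-\xi^2)/(\delta^2(1-\delta^2))$, and conversion of the resulting bound into $\phi(\delta/\xi)^m$; your factor-by-factor estimate using $\sigma_k\leq\sigma_m$ is just the paper's preliminary reduction to the projection $P$ carried out inside the product. One correction: your stated reason for abandoning the power-series argument of Lemma~\ref{thm4.3} is mistaken --- the coefficients $\tfrac{1-\vartheta^{2k}}{k}-\tfrac{1-\vartheta^{2k+2}}{k+1}$ are nonnegative for \emph{all} $\vartheta\geq 0$ (they vanish to second order at $\vartheta=1$), which is exactly why the paper can invoke that lemma's argument verbatim for $\delta/\xi>1$; your substitute inequality $\log r\leq r-1$ is nevertheless valid and somewhat cleaner, and your handling of $\delta=1$ by letting $t\to\infty$ is an acceptable alternative to the paper's measure-zero observation.
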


\begin{proof}
We can restrict ourselves again to diagonal matrices 
$A=\Sigma$. Let $P$ be the matrix that extracts from 
a vector in $\mathbb{R}^n$ its first $m$ components.
As $\|\Sigma x\|\leq\|\Sigma\|\|Px\|$ and 
$\|\Sigma\|>0$, the given volume ratio can 
then be estimated by the volume ratio
\begin{displaymath}  
\frac
{\lambda\big(\big\{x\,\big|\,\|Px\|\geq\delta\,\|x\|,\,\|x\|\leq R\big\}\big)}
{\lambda\big(\big\{x\,\big|\,\|x\|\leq R\big\}\big)}.
\end{displaymath}
As in the proof of Theorem~\ref{thm4.1}, we can 
estimate this volume ratio for sufficiently small 
positive values $t$ by the integral
\begin{displaymath}
\Big(\frac{1}{\sqrt{\pi}}\Big)^n\!\int
\exp\big(\,t\,\big(\|Px\|^2-\delta^2\|x\|^2\big)\big)
\exp\big(-\|x\|^2\big)\dx.
\end{displaymath}
This integral splits into a product of 
one-dimensional integrals and takes 
the value
\begin{displaymath}
\bigg(\frac{1}{1+\delta^2 t-t}\bigg)^{m/2}
\bigg(\frac{1}{1+\delta^2 t}\bigg)^{(n-m)/2},
\end{displaymath}
which attains, for $\delta<1$, on the interval 
$0<t<1/(1-\delta^2)$ its minimum at
\begin{displaymath}
t=\frac{\delta^2-\xi^2}{(1-\delta^2)\delta^2}. 
\end{displaymath}
It takes at this point $t$ again the value
\begin{displaymath}
\bigg(\,\frac{\delta}{\xi}\,
\bigg(\frac{1-\delta^2}{1-\xi^2}\bigg)^\gamma\;
\bigg)^m,  \quad \gamma=\frac{1-\xi^2}{2\xi^2}.
\end{displaymath}
This leads as in the proof of Lemma~\ref{thm4.3} 
to the estimate (\ref{eq4.15}). As the set of 
all $x$ for which $\|Px\|=\|x\|$ holds has 
measure zero, (\ref{eq4.15}) remains true for 
$\delta=1$.
\end{proof}

For $(m\times n)$-matrices $A$ with dimension 
ratio $m/n\leq\delta_0^2$, the volume ratios 
(\ref{eq2.6}) tend therefore on the interval 
$\delta_0<\delta\leq 1$ pointwise and on its 
closed subintervals uniformly and exponentially 
to one as $m$ goes to infinity. For sequences 
of matrices for which the ratio $m/n$ of their 
dimensions tends to zero, the volume ratios 
(\ref{eq2.6}) hence tend, for all $\delta>0$, 
pointwise to one. This has, however, often 
less severe implications than it might first 
appear. This is demonstrated by the example 
of the matrices $A=T^t$ from section~\ref{sec2}, 
whose dimensions (\ref{eq2.13}) were
\begin{equation}    \label{eq4.16}
m=3\times N, \quad n=3\times\frac{N(N+1)}{2}.
\end{equation}
Figure~\ref{fig2} shows the bounds for the 
volume ratios (\ref{eq2.6}) resulting from 
the application of Lemma~\ref{thm2.7} to 
these matrices as functions of $\delta<1$ 
for $N$ ranging from $4$ to $32$, or, in 
the framework of quantum mechanics, for 
systems with up to $32$ electrons.

\begin{figure}[t]   \label{fig2}
\includegraphics[width=0.93\textwidth]{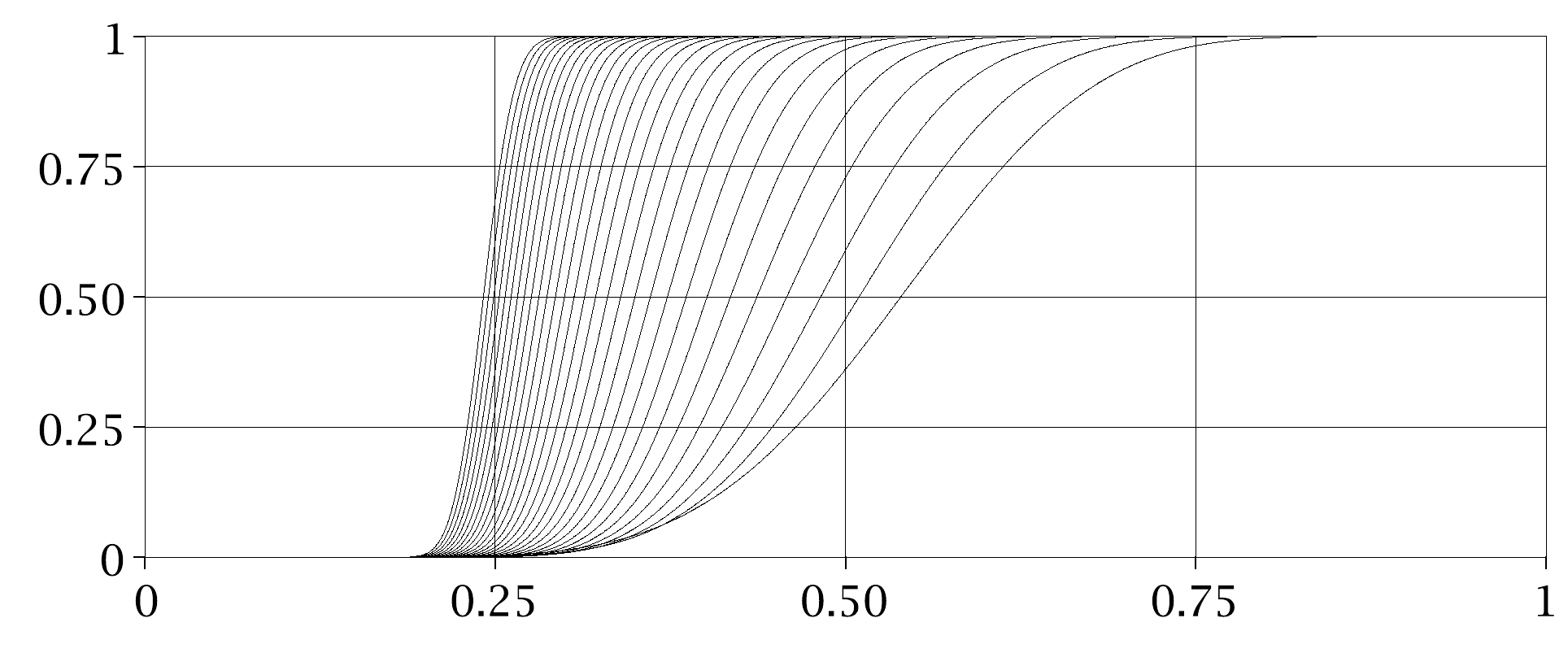}
\caption{The bounds for the volume ratios \rmref{eq2.6}  for the 
 example from section~{\rm \ref{sec2}} for $N=4,\ldots,32$.}
\end{figure}

We finally consider the case of orthogonal projections
from the $\mathbb{R}^n$ to the $\mathbb{R}^m$, that 
is, of $(m\times n)$-matrices $P$ with one as the only
singular value. The condition number of such matrices
is $\kappa=1$, and their norm is $\|P\|=1$. If the 
dimension ratios $m/n$ tend to $\delta_0^2$, or even 
remain as in Figure~\ref{fig1} fixed, the corresponding 
volume ratios (\ref{eq2.8}) tend therefore to a step 
function with jump discontinuity at~$\delta_0$. This 
observation is widely equivalent to the random projection 
theorem. Let $\xi$ be again the square root of $m/n$. 
For a randomly chosen vector $x$, the probability that
\begin{equation}    \label{eq4.17}
(1-\varepsilon)\xi\,\|x\|\leq\|Px\|<(1+\varepsilon)\xi\,\|x\|
\end{equation}
holds is then $F((1+\varepsilon)\xi)-F((1-\varepsilon)\xi)$, 
with the at least for even-numbered differences of the 
dimensions explicitly known distribution function
\begin{equation}    \label{eq4.18}
F(\delta)=\psi\bigg(\frac{\delta}{\sqrt{1-\delta^2}}\bigg),
\end{equation}
and tends exponentially to one as $m$ goes to 
infinity. This means that the orthogonal projection 
of a randomly chosen unit vector $x\in\mathbb{R}^n$ 
onto a given subspace of high dimension $m$ 
possesses with high probability a norm 
\begin{equation}    \label{eq4.19}
\approx\sqrt{\frac{m}{n}}.
\end{equation}
A lower bound for this probability depending only 
on the dimension $m$ but not on the dimension $n$ 
can be derived from the estimates (\ref{eq4.12}) 
and (\ref{eq4.15}). Because of
\begin{equation}    \label{eq4.20}
\phi(1\pm\varepsilon)<\exp(-c\,\varepsilon^2),
\quad c=-\ln(\phi(2)),
\end{equation}
for values $0<\varepsilon<1$, the probability that 
(\ref{eq4.17}) holds is in any case greater than 
\begin{equation}    \label{eq4.21}
1-2\exp(-c\,\varepsilon^2m)
\end{equation}
and the random projection theorem recovered.


\bibliographystyle{siamplain}
\bibliography{references}


\end{document}